\def\sqr#1#2{{\vcenter{\vbox{\hrule height.#2pt
              \hbox{\vrule width.#2pt height#1pt \kern#1pt \vrule width.#2pt}
              \hrule height.#2pt}}}}
\def\a{\alpha}
\def\b{\beta}
\def\e{\varepsilon}
\def\l{\lambda}
\def\mf{\mathcal {F}}
\def\me{\mathbb{E}}
\def\R{{\bf R}}
\def\D{\Delta}
\def\L{\Lambda}
\def\O{\Omega}
\def\no{\noindent}
\def\ms{\medskip}
\def\bs{\bigskip}
\def\q{\quad}
\def\qq{\qquad}
\def\lan{\mathop{\langle}}
\def\ran{\mathop{\rangle}}
\def\max{\mathop{\rm max}}
\def\sup{\mathop{\rm sup}}
\def\ae{\hbox{\rm a.e.{ }}}
\def\as{\hbox{\rm a.s.{ }}}
\def\({\Big (}
\def\){\Big )}
\def\[{\Big[}
\def\]{\Big]}
\def\ds{\displaystyle}
\def\square#1{\vbox{\hrule\hbox{\vrule height#1%
     \kern#1\vrule}\hrule}}
\def\rectangle#1#2{\vbox{\hrule\hbox{\vrule height#1%
     \kern#2\vrule}\hrule}}
\font\tenbb=msbm10 \font\sevenbb=msbm7 \font\fivebb=msbm5
\newtheorem{lemma}{Lemma}[section]
\newtheorem{remark}{Remark}[section]
\newtheorem{theorem}{Theorem}[section]
\newtheorem{definition}{Definition}[section]
\begin{document}
\title{\bf A Semidiscrete Galerkin Scheme for Backward Stochastic Parabolic Differential Equations}

\author{Yanqing Wang\thanks{School of Mathematics and Statistics, Southwest University, Chongqing 400715, P.~R.~China. Part of this work was finished when the author was a Ph.D. student at the "Key Laboratory of Systems and Control, Academy of Mathematics and Systems Science, Chinese Academy of Sciences€. The author is supported by the National Basic Research Program of China (973 Program) under grant 2011CB808002, by the NSF of China under grants 11231007 and 11101452, and by Fundamental Research Funds for the Central Universities under grants SWU113038 and XDJK2014C076.  {\small\it E-mail:} {\small\tt
yqwang@amss.ac.cn}.\ms}}


\maketitle

\begin{abstract}
In this paper, we present a numerical
scheme to solve the initial-boundary value problem for backward stochastic
partial differential equations of parabolic type. Based on the Galerkin method, we approximate the original equation by a family of backward stochastic differential equations (BSDEs, for short), and then solve these BSDEs by the time discretization. Combining the truncation with respect to the spatial variable and the backward Euler method on time variable, we obtain the global $L^2$ error estimate.

 \no

\end{abstract}

\bs

\no \bs

\no{\bf Key Words}.  backward stochastic parabolic differential
equation, backward stochastic differential equation, Galerkin method, strong convergence.

\bs \no{\bf 2000 Mathematics Subject Classification}. 60H15, 65M60.

\section{Introduction}


Let $T\in(0,+\infty)$, $(\O,\mf,\mathbb{F},P)$ be a complete probability space and
$\mathbb{F}=\{\mf_t,t\in [0,T]\}$ be the natural filtration generalized by a 1-dimensional Wiener process  $\{W(t):t\in [0,T]\}$ satisfying the usual conditions.
The purpose of this work is to present a numerical scheme for solving the following backward stochastic parabolic differential equation (BSPDE, for short):
\begin{equation}\label{110}
\left\{
\begin{array}{lll}
dq(t,x)=(-\Delta q(t,x)+f(t,x,q(t,x),r(t,x)))dt+r(t,x)dW(t), & \mbox{in}\,\, [0,T)\times D, \\
q(t,x)=0,      & \mbox{on}\,\, [0,T)\times \partial{D}, \\
q(T,x)=q_T(x), & \mbox{in}\,\, D. \\
\end{array}
\right.
\end{equation}
Here, $D\subset \R^d$ is a bounded domain with $C^2$ boundary ($d\in\bf N$), $\D=\sum_{k=1}^d \partial^2/\partial x_k^2$ is the Laplacian, and $f$ and $q_T$ are given data satisfying suitable conditions to be given later.

BSPDEs are nontrivial extensions of BSDEs, which
 possess
interesting theoretical values and were originally introduced in the study of optimal control problems as the adjoint equation
appeared in the Pontryagin
maximum principle when the controlled system is a  stochastic parabolic differential equation
(see, e.g., \cite{Hu-Peng90,Lu-Zhang}), then are proved usefully in nonlinear filtering \cite{Pardoux79},
mathematical finance \cite{Hu06} and so on.

The well-posedness of general BSPDEs driven by Wiener process has been
considered in a number of papers using either duality
techniques \cite{Lu-Zhang,Zhou92} or martingale representation and fixed point arguments
\cite{Hu-Peng91,Tessitore}.

In recent years, the study of numerical solutions to stochastic differential equations becomes an active topic, and has attracted considerable attention in many fields such as control theory and mathematical finance.

Up to now, some numerical schemes for the forward stochastic partial differential equations have been presented:  the Galerkin approximation (\cite{Grecksch-Kloeden96}), the finite difference method in space and time (see, e.g., \cite{Gyongy98,Shardlow99}), the finite element method (see, e.g., \cite{Walsh05, Yan04}), the stochastic Taylor expansion method (see, e.g., \cite{Jentzen-Kloeden09}), the Wiener chaos expansion method (see, e.g., \cite{Hou06}) and so on.


On the other hand, there are also several algorithms for solving BSDEs. First of all, based on the four step scheme \cite{Ma-Protter-Yong94}, and using the relations between BSDEs and PDEs, Douglas et al. \cite{Douglas-Ma-Protter96}, Milstein and Tretyakov \cite{Milstein-Tretyakov06} obtained a numerical algorithm to solve BSDEs. The second kind of algorithms is the backward Euler method (see \cite{Bender-Denk07,Bouchard-Touzi04,Gobet-Lemor-Warin,Hu-Nualart-Song,Zhang04} for more details), for which the most difficult part is to calculate the conditional expectation. The third one is the random walk approach, in which the Brownian motion is replaced by a scaled random walk (see \cite{Ma-Protter-Martin-Torres02} and the references therein). We should also mention works by Wang and Zhang \cite{Wang-Zhang11} presenting the finite transposition method, Lototsky et al. \cite{Lototsky-Mikulevicius-Rozovskii97}, Briand and Labart \cite{Briand-Labart12} giving the Wiener chaos expansion method and so on.

Compared with the development of numerical methods for BSDEs and forward stochastic partial differential equations, the study of numerical schemes for BSPDEs is quite limited. To the author's best knowledge, there exists no published work in this direction. Here we should mention the work of Yannacopoulos et al. \cite{Karatzas}, which only listed an idea on the numerical scheme for solving BSPDEs. Their method
depends on the Wiener chaos expansion, however they did not prove the convergence speed.

The aim of this work is to provide a numerical scheme for solving Eq. \eqref{110}
based on the semidiscrete Galerkin approximation. This scheme is divided into
two steps. Firstly, we approximate Eq. \eqref{110} by a family of BSDEs; then, adopting the backward Euler method, we give numerical solutions to the related
BSDEs. This work is an improved and complete version of \cite[Chapter 5]{Wang13}.

Compared to the Wiener chaos expansion method listed in \cite{Karatzas}, it seems that the Galerkin scheme is easier to operate, at least for some special cases. Indeed, if the
domain is ``good" enough and the eigenvalues, eigenfunctions are easy
to be computed, we can take the finite-dimensional approximation spaces to be the ones
spanned by suitable eigenfunctions.

The rest of the paper is organized as follows: In Section 2, we introduce our general setting and review the well-posedness of Eq. \eqref{110}. In Section 3, we make use of the space-discretised Galerkin approximation to Eq.\ \eqref{110} and construct the desired finite-dimensional spaces; then Eq. \eqref{110} is approximated by a family of BSDEs. We prove the strong convergence in appropriate spaces and obtain the rate of the convergence in the $L^2$ norm with respect to the space variable.
In Section 4,  we adopt the backward Euler method, for which the convergence and error analysis are also provided.


\section{Preliminaries}

Let $H$ be a Hilbert space with norm $\|\cdot\|_{H}$ and inner product $\lan \cdot,\cdot \ran_H$. The following classes of processes will be frequently used throughout this paper:
\begin{itemize}
\item For any $t\in[0,T]$, $L^2_{\mf_t}(\O; H)$ is the space of all $\mf_t$-measurable, $H$-valued
  random variables $\xi$ satisfying $\|\xi\|^2_{L^2_{\mf_t}(\O; H)}=\me\|\xi\|_{H}^2<\infty$;

\item $C_{\mathbb{F}}([0,T];L^2(\O; H))$ is the space of all $\mathbb{F}$-adapted, continuous, $H$-valued stochastic processes
  $X$ satisfying $\|X\|^2_{C_{\mathbb{F}}([0,T];L^2(\O; H))}=\sup_{t\in[0,T]}\me\|X(t)\|_H^2<\infty$;  $L^2_{\mathbb{F}}(\O;C([0,T]; H))$ is the subspace of $C_{\mathbb{F}}([0,T];L^2(\O; H))$ such that $\me(\sup_{t\in[0,T]}\|X(t)\|_{H}^2)<\infty$;

\item $L^2_{\mathbb{F}}(\O\times(0,T); H)$ is the space of all $\mathbb{F}$-adapted, $H$-valued stochastic processes $Y$
  satisfying $\|Y\|^2_{L^2_{\mathbb{F}}(\O\times(0,T); H)}=\me(\int_{0}^{T}\|Y(t)\|_{H}^2dt)<\infty$;

\item $\mathbb{D}^{k,p}(H)\,(k,\,p\geq 1)$ is the space of all ${\cal F}_T$-measurable, $H^{\otimes k}$-valued, $k$-times Malliavin differentiable random variables $\xi$ satisfying
$$\|\xi\|_{k,p}:=\Big(\me(|\xi|^p)+\sum_{j=1}^k\me(\|D^j\xi\|^p_{H^{\otimes j}})\Big)^{1/p}<+\infty;$$

\item $M^{2,p}(p\geq 2)$ is the space of all ${\cal F}_T$-measurable square integrable random variables $\xi$ which admits a stochastic integral representation:
$$\xi=\me\xi+\int_0^Tu(t)dW(t),$$
where $u(\cdot)$ is a progresively measurable process
satisfying
$\sup_{0\leq t\leq T}\me|u(t)|^p<+\infty;$

\item $\mathbb{L}_a^{1,2}(H)$ is the space of all $H$-valued progressively measurable processes
$\{u(t)\}_{t\in [0,T]}$ satisfying
\begin{description}
\item[(i)] For almost all $t\in[0,T]$, $u(t)\in \mathbb{D}^{1,2}(H)$;
\item[(ii)] $\me(\int_0^T |u(t)|^2dt+\int_0^T\int_0^T|D_{\theta}u(t)|^2d\theta dt)<\infty.$
\end{description}

\end{itemize}
Besides, denote by $\lan\cdot,\, \cdot\ran$ and $|\cdot|$ respectively the inner product and norm in different Euclidean spaces, which can be identified from the context; by $A^*$ the transport matrix of $A$.

Let us introduce the following two assumptions:

\textbf{(A1)}\q\it $f:\R^+\times D \times \R^d\times\R^d \longrightarrow \R^d$ is $\frac{1}{2}$-H\"{o}lder continuous with respect to $t$, i.e., there exists a positive constant $L$, such that $|f(t_1,x,y,z)-f(t_2,x,y,z)|\leq L\sqrt{|t_1-t_2|}$, for any $t_1,\,t_2\in \R^+,\,x\in D,\,y,z\in\R^d$; and has continuous and uniformly bounded first and second partial derivatives with respect to $y$ and $z$ (still denote this bound by $L$).
Moreover, $f(\cdot,\cdot,0,0)\in L^2(0,T;H_0^1(D))$. \rm

\textbf{(A2)}\q \it $q_T\in L^2_{\mf_T}(\O;H_0^1(D))\cap  \mathbb{D}^{2,q}(L^2(D)),$
$$\me|D_\theta q_T-D_{\theta'}q_T|\leq M|\theta-\theta'|,$$
$$\max\Big\{\me|q_T|_{L^2(D)}^q,\,\sup_{\theta\in[0,T]}\me|D_{\theta}q_{T}|^q,\,
\sup_{\theta'\in[0,T]}\sup_{\theta\in[0,T]}\me|D_{\theta'}D_{\theta}q_{T}|^q\Big\}\leq M,$$
where $q>4$ and $M$ is a positive constant. \rm\\

Define $A:\,D(A)=H^2(D)\cap H_0^1(D)\longrightarrow L^2(D)$ by $Af=\Delta f$, for all $f\in D(A)$. It is easy to show that $A$ is the infinitesimal generator of a $C_0$-semigroup
$\{e^{At}\}_{t\geq 0}$ on $L^2(D)$. Then Eq. (\ref{110}) can be re-writen as the following abstract form:
\begin{equation}\label{111}
\left\{
\begin{split}
dq(t)&=\big(-A q(t)+f(t,\cdot,q(t),r(t))\big)dt+r(t)dW(t),\q   t\in [0,T), \\
 \displaystyle
 q(T)&=q_T.
\end{split}
\right.
\end{equation}

Now, we recall the definition of solution to Eq. (\ref{111})
and its well-posedness. We refer the reader to \cite{Hu-Peng91,Tessitore} for details.

\begin{definition}\label{definition of mild solution}%
A pair of random fields $(q,r)\in C_{\mathbb{F}}([0,T];L^2(\O;L^2(D)))$ $\times
L^2_{\mathbb{F}}(\O\times(0,T);L^2(D))$ is called a mild solution to Eq. \eqref{111}, if
for every $t\in [0,T]$, it holds
that
\begin{equation*}
\begin{aligned}
q(t)+\int_t^Te^{A(s-t)}f(s,\cdot, q(s),r(s))ds+\int_t^Te^{A(s-t)}r(s)dW(s)=e^{A(T-t)}q_T,\,\as
\end{aligned}
\end{equation*}
\end{definition}


\begin{theorem}\label{bspde solution}%
Suppose that $\bf{(A1)}$ holds
and the terminal condition $q_T\in L^2_{\mf_T}(\O;L^2(D))$. Then Eq. (\ref{111}) admits a unique mild solution $(q,r)\in C_{\mathbb{F}}([0,T];L^2(\O;L^2(D)))$ $\times
L^2_{\mathbb{F}}(\O\times(0,T);L^2(D))$. In particular, for any $t\in [0,T]$,
\begin{equation*}
\begin{aligned}
\sup_{t\in[0,T]}\me\|q(t)\|_{L^2(D)}^2+\me\int_0^T\|r(s)\|^2_{L^2(D)}ds
\leq & C\Big\{\me\int_0^T\|f(s,\cdot,0,0)\|^2_{L^2(D)}ds+\me\|q_T\|^2_{L^2(D)}\Big\},\\
\end{aligned}
\end{equation*}
where $C$ depends only on $A,\, T$ and $L$.
Furthermore, if $q_T\in L^2_{\mf_T}(\O;H_0^1(D))$, then
$$(q,r)\in \big(L^2_{\mathbb{F}}(\O\times(0,T);H^2(D)\cap H_0^1(D))\cap L^2_{\mathbb{F}}(\O;C([0,T];L^2(D))\big)\times L^2_{\mathbb{F}}(\O\times(0,T);H_0^1(D)),$$
and the following estimate holds
\begin{equation*}
\begin{aligned}
&\me\big(\sup_{t\in[0,T]}\|q(t)\|_{H_0^1(D)}^2\big)+\me\int_0^T\|q(s)\|^2_{H^2(D)}+\|r(s)\|^2_{H_0^1(D)}ds\\
\leq & C\Big\{\me\int_0^T\|f(s,\cdot,0,0)\|^2_{L^2(D)}ds+\me\|q_T\|^2_{H_0^1(D)}\Big\}.\\
\end{aligned}
\end{equation*}
\end{theorem}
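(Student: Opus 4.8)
\emph{Overall plan and Step 1 (well-posedness).} The plan is to follow the standard route for backward stochastic evolution equations: a Banach fixed-point argument for existence and uniqueness, and It\^o-type energy identities for the a priori bounds and for the improved regularity. For Step 1 I would first solve the linear equation and then remove the nonlinearity. Given $(q,r)$ in $C_{\mathbb{F}}([0,T];L^2(\O;L^2(D)))\times L^2_{\mathbb{F}}(\O\times(0,T);L^2(D))$, set $F(s):=f(s,\cdot,q(s),r(s))$, which by \textbf{(A1)} and $f(\cdot,\cdot,0,0)\in L^2(0,T;L^2(D))$ lies in $L^2_{\mathbb{F}}(\O\times(0,T);L^2(D))$. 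Let $\{e_k\}$ be the orthonormal basis of $L^2(D)$ of Dirichlet eigenfunctions of $-\D$, with eigenvalues $0<\l_1\le\l_2\le\cdots\to+\infty$; then the linear equation with forcing $F$ and terminal datum $q_T$ decouples into the scalar linear BSDEs $dq_k(t)=(\l_k q_k(t)+F_k(t))dt+r_k(t)dW(t)$, $q_k(T)=\lan q_T,e_k\ran$, whose solutions are $q_k(t)=\me[\,e^{-\l_k(T-t)}\lan q_T,e_k\ran-\int_t^T e^{-\l_k(s-t)}F_k(s)ds\mid\mf_t\,]$, with $r_k$ read off from the martingale representation of the associated $\mathbb{F}$-martingale; since $e^{-\l_k s}\le1$ for $s\ge0$, the series $\sum_k q_k e_k$ and $\sum_k r_k e_k$ converge in the stated spaces and give the unique mild solution of the linear problem (equivalently $r$ can be recovered from the martingale representation of $t\mapsto\me[e^{A(T-t)}q_T-\int_t^T e^{A(s-t)}F(s)ds\mid\mf_t]$, the semigroup property providing consistency in $t$). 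This defines a solution operator $\mathcal{S}$ on the product space, $\mathcal{S}(q,r)$ being the mild solution of the linear equation with forcing $f(\cdot,\cdot,q(\cdot),r(\cdot))$; using the Lipschitz bound $L$ of $f$ in $(y,z)$ together with the linear estimate, $\mathcal{S}$ becomes a contraction once the norm of the product space is reweighted by $e^{\b t}$ with $\b$ large enough (or, alternatively, on a short interval $[T-\d,T]$ which is then patched), and its unique fixed point is the mild solution of \eqref{111}; continuity of $t\mapsto q(t)$ in $L^2(\O;L^2(D))$ is built into the construction.

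\emph{Step 2 (basic estimate).} I would apply It\^o's formula to $t\mapsto\|q(t)\|_{L^2(D)}^2$. Since $(q,r)$ is only a mild solution, this is carried out first on the Yosida approximations $A_\l=\l A(\l I-A)^{-1}$ (or on the finite-dimensional Galerkin approximations $q^{N}=\sum_{k\le N}q_k e_k$), for which the solutions are strong and the formula legitimate, with all constants below uniform in $\l$ (resp.\ $N$), and one then passes to the limit. The resulting identity is
\begin{equation*}
\begin{aligned}
&\me\|q(t)\|_{L^2(D)}^2+2\me\int_t^T\|\n q(s)\|_{L^2(D)}^2ds+\me\int_t^T\|r(s)\|_{L^2(D)}^2ds\\
&\qquad=\me\|q_T\|_{L^2(D)}^2-2\me\int_t^T\lan q(s),f(s,\cdot,q(s),r(s))\ran ds,
\end{aligned}
\end{equation*}
where the dissipativity $\lan-\D q,q\ran=\|\n q\|_{L^2(D)}^2\ge0$ has been used, so that the $\n q$ and $r$ terms sit on the favourable side. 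Bounding $|f(s,\cdot,q,r)|\le|f(s,\cdot,0,0)|+L|q|+L|r|$ and using Young's inequality with a constant small enough that the arising $\me\int_t^T\|r\|_{L^2(D)}^2$ is absorbed into the left-hand side, Gronwall's inequality (in backward form) yields the first estimate of the theorem, with constant depending only on $A$, $T$ and $L$; in particular $q\in L^2_{\mathbb{F}}(\O\times(0,T);H_0^1(D))$.

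\emph{Step 3 (regularity).} Now suppose $q_T\in L^2_{\mf_T}(\O;H_0^1(D))$. I would apply It\^o's formula (again on the approximations first) to $t\mapsto\|\n q(t)\|_{L^2(D)}^2=\lan-\D q(t),q(t)\ran$, which gives
\begin{equation*}
\begin{aligned}
&\me\|\n q(t)\|_{L^2(D)}^2+\me\int_t^T\|\n r(s)\|_{L^2(D)}^2ds+2\me\int_t^T\|\D q(s)\|_{L^2(D)}^2ds\\
&\qquad=\me\|\n q_T\|_{L^2(D)}^2-2\me\int_t^T\lan-\D q(s),f(s,\cdot,q(s),r(s))\ran ds.
\end{aligned}
\end{equation*}
The structural point is that here $\me\int_t^T\|\n r\|_{L^2(D)}^2$ and $2\me\int_t^T\|\D q\|_{L^2(D)}^2$ both land on the gain side, while the single cross term obeys $|\lan-\D q,f\ran|\le\frac12\|\D q\|_{L^2(D)}^2+C\big(\|f(s,\cdot,0,0)\|_{L^2(D)}^2+\|q\|_{L^2(D)}^2+\|r\|_{L^2(D)}^2\big)$; absorbing $\frac12\|\D q\|_{L^2(D)}^2$ on the left and invoking Step 2 to bound $\me\int_0^T(\|q\|_{L^2(D)}^2+\|r\|_{L^2(D)}^2)ds$ by the data, one obtains $\me\|\n q(t)\|_{L^2(D)}^2+\me\int_0^T\big(\|\n r(s)\|_{L^2(D)}^2+\|\D q(s)\|_{L^2(D)}^2\big)ds\le C\big(\me\int_0^T\|f(s,\cdot,0,0)\|_{L^2(D)}^2ds+\me\|q_T\|_{H_0^1(D)}^2\big)$. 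Since $D$ has $C^2$ boundary, the elliptic estimate $\|q\|_{H^2(D)}\le C\|\D q\|_{L^2(D)}$ for $q\in H^2(D)\cap H_0^1(D)$ then gives $q\in L^2_{\mathbb{F}}(\O\times(0,T);H^2(D)\cap H_0^1(D))$ and $r\in L^2_{\mathbb{F}}(\O\times(0,T);H_0^1(D))$, and a Burkholder--Davis--Gundy estimate on the stochastic-integral terms upgrades the bound to $\me\big(\sup_{t\in[0,T]}\|q(t)\|_{H_0^1(D)}^2\big)<\infty$, whence also $q\in L^2_{\mathbb{F}}(\O;C([0,T];L^2(D)))$ and the second estimate follows.

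\emph{Main obstacle.} The main obstacle, in both Steps 2 and 3, is the rigorous justification of It\^o's formula: a mild solution need not be regular enough to apply It\^o's formula to $\|q(t)\|_{L^2(D)}^2$, still less to $\|\n q(t)\|_{L^2(D)}^2$ (whose It\^o differential already involves $\|\n r\|_{L^2(D)}^2$), so each energy identity must be obtained on the Yosida or Galerkin approximations, with all constants shown to be uniform, and only then passed to the limit. A secondary subtlety, specific to Step 3, is to check that $\me\int\|\n r\|^2$ truly comes out on the gain side of the $H^1$ identity and that no uncontrolled term involving $\n r$ survives on the other side; this is precisely where the parabolic (dissipative) structure is essential, and it is the reason why assuming merely one extra spatial derivative on $q_T$ suffices to close the argument.
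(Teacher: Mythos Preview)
The paper does not actually prove this theorem: it is stated as a recalled result, with the sentence ``We refer the reader to \cite{Hu-Peng91,Tessitore} for details'' immediately preceding the definition and theorem, and no proof is supplied. Your sketch is the standard route (fixed point for existence/uniqueness, It\^o energy identities plus Gronwall for the $L^2$ estimate, a second It\^o identity at the $H_0^1$ level together with elliptic regularity and Burkholder--Davis--Gundy for the improved regularity), and it is essentially what one finds in those references; in particular the Galerkin/Yosida justification you flag as the main obstacle is exactly the technical point carried out in \cite{Tessitore}. So your proposal is correct and more detailed than the paper itself, which simply defers to the literature.
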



The next lemma provides a standard but useful estimate on the solution to stochastic differential equations (SDEs, for short). We list it for ready references.
\begin{lemma}\label{luck}
Suppose that $x(\cdot)$ solve the following SDE:
\begin{equation}\label{wang31}
\left\{
\begin{split}
dx(t)&=\big(A(t)x(t)+f(t)\big)dt+\big(B(t)x(t)+g(t)\big)dW(t),\q   t\in [0,T), \\
 \displaystyle
 x(0)&=x_0,
\end{split}
\right.
\end{equation}
where $A,\,B$ are bounded, $f(\cdot)\in L^p_{\mathbb{F}}(\O;L^1((0,T);\R^n))$ and $g(\cdot)\in L^p_{\mathbb{F}}(\O;L^2((0,T);\R^n))$. Then
$$\me\sup_{0\leq t\leq T}|x(t)|^p\leq C\bigg[\me|x_0|^p+\me\Big(\int_0^T|f(t)|dt\Big)^p+\me\Big(\int_0^T|g(t)|^2dt\Big)^{\frac{p}{2}}\bigg],$$
where $C$ is a constant depending only on $A,\,B.$
\end{lemma}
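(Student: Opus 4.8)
The plan is to treat \eqref{wang31} as a linear SDE and estimate $\sup_{0\le t\le T}|x(t)|^p$ by combining H\"older's inequality, the Burkholder--Davis--Gundy (BDG) inequality, and Gronwall's lemma. First I would fix $t\in[0,T]$ and write the integral form
\[
x(t)=x_0+\int_0^t\big(A(s)x(s)+f(s)\big)\,ds+\int_0^t\big(B(s)x(s)+g(s)\big)\,dW(s),
\]
then apply the elementary inequality $|a_1+a_2+a_3|^p\le 3^{p-1}(|a_1|^p+|a_2|^p+|a_3|^p)$ and take $\sup$ over $[0,s]$ followed by expectation. The drift term is handled by H\"older in time: since $A$ is bounded,
\[
\me\sup_{0\le r\le s}\Big|\int_0^r A(u)x(u)\,du\Big|^p\le C\,\me\Big(\int_0^s|x(u)|\,du\Big)^p\le C\, s^{p-1}\int_0^s\me|x(u)|^p\,du,
\]
and similarly the contribution of $f$ is bounded by $C\,\me(\int_0^T|f(u)|\,du)^p$. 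For the stochastic integral I would invoke BDG to get $\me\sup_{0\le r\le s}|\int_0^r(B(u)x(u)+g(u))\,dW(u)|^p\le C\,\me(\int_0^s|B(u)x(u)+g(u)|^2\,du)^{p/2}$, then split and use boundedness of $B$ together with H\"older (in the form $(\int_0^s|x(u)|^2du)^{p/2}\le s^{p/2-1}\int_0^s|x(u)|^p du$) to bound the $x$-part by $C\int_0^s\me|x(u)|^p\,du$ and the $g$-part by $C\,\me(\int_0^T|g(u)|^2\,du)^{p/2}$.

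Setting $\varphi(s):=\me\sup_{0\le r\le s}|x(r)|^p$, the above yields
\[
\varphi(s)\le C\Big[\me|x_0|^p+\me\Big(\int_0^T|f(u)|\,du\Big)^p+\me\Big(\int_0^T|g(u)|^2\,du\Big)^{p/2}\Big]+C\int_0^s\varphi(u)\,du,
\]
provided $\varphi$ is known a priori to be finite; an application of Gronwall's inequality then gives the claimed bound with a constant $C$ depending only on $T,p$ and the bounds on $A,B$. The one genuine subtlety — and the step I expect to need the most care — is justifying that $\varphi(s)<\infty$ so that Gronwall applies: strictly one first truncates with the stopping times $\tau_N=\inf\{t:|x(t)|\ge N\}$, runs the whole estimate up to $t\wedge\tau_N$ to obtain a bound uniform in $N$, and then lets $N\to\infty$ by monotone convergence. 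Given the existence/regularity of the solution $x(\cdot)$ that is being assumed, this localization is routine, so I would only remark on it rather than belabor it; everything else is a bookkeeping exercise in the three named inequalities.
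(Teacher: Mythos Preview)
The paper does not actually prove this lemma: it is stated without proof, introduced with the remark that it ``provides a standard but useful estimate\ldots{} We list it for ready references.'' Your proposed argument --- integral form, H\"older on the drift, BDG on the martingale part, then Gronwall with a localization to ensure finiteness --- is correct and is precisely the standard proof one would supply for such an $L^p$ estimate on a linear SDE; there is nothing to compare against in the paper itself.
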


\section{Approximating BSPDE by BSDEs}

Let $\{(-\l_i,\phi_i)\}_{i=1}^{\infty}$ be the sequence of eigenvalues and
eigenfunctions of $A$, where $\{\phi_i\}_{i=1}^{\infty}$
constitutes an orthonormal basis of $L^2(D)$. 
$\{\phi_i\}_{i=1}^{\infty}$ is also an orthogonal basis of
$H_0^1(D)$. Take the subspace $S_n=\mbox{span}\{\phi_1,\cdots,\phi_n\},\,n=1,2,\cdots$.
Denote by $P_n$ the projection of $L^2(D)$ onto $S_n$ and define $A_n$ by
$A_n=P_nA|_{S_n}$.

The semidiscrete problem corresponding to Eq. (\ref{111}) is to find
a process pair $(q_n,r_n)\in  L^2_{\mathbb{F}}(\O;C([0,T];S_n))\times
L^2_{\mathbb{F}}(\O\times(0,T);S_n)$ solving the following BSDE:
\begin{equation}\label{eq:aaa}
\left\{
\begin{split}
dq_n(t)&=\big(-A_nq_n(t)+P_nf(t,\cdot,q_n(t),r_n(t))\big)dt+r_n(t)dW(t), \q t\in [0,T], \\
 \displaystyle
 q_n(T)&=P_nq_T.
\end{split}
\right.
\end{equation}

In the next theorem,  we prove that $(q_n,\,r_n)$ is convergent to $(q,\,r)$ and obtain the rate of convergence with respect to the space variable.


\begin{theorem}\label{convergence speed}%
Suppose that $\bf{(A1)}$ holds and $q_T\in L^2_{\mf_T}(\O;H_0^1(D))$.
Let $(q,r),~(q_n,r_n)$ be solutions to Eq. (\ref{111}) and (\ref{eq:aaa}),
respectively. Then the following estimate holds
\begin{equation}
\begin{aligned}
  &\me\sup_{t\in[0,T]}\|q(t)-q_n(t)\|^2_{L^2(D)}+\me\int_0^T\|q(t)-q_n(t)\|^2_{H_0^1(D)}+\|r(t)-r_n(t)\|^2_{L^2(D)}dt\\
\leq& \frac{C}{\l_{n+1}}\[\|q(T)\|^2_{L^2_{\mf_T}(\O;H_0^1(D))}
                        +\|f(\cdot,0,0)\|^2_{L^2_{\mathbb{F}}(\O\times(0,T);H_0^1(D))}\],
\end{aligned}
\end{equation}
where $C$ is a constant depending only on $T$, $L$ and $D$.

\end{theorem}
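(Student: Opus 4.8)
The natural strategy is an energy estimate on the error process $(\bar q, \bar r) := (q - q_n, r - r_n)$, combined with the spectral decay of the projection error. First I would write down the equation satisfied by $\bar q$. Applying $P_n$ to the mild/abstract form of \eqref{111} and subtracting \eqref{eq:aaa}, the error splits into a part living in $S_n$ and a part living in $S_n^\perp$. Concretely, set $q^\perp := (I - P_n) q$ and $r^\perp := (I - P_n) r$; then $P_n q - q_n$ solves a BSDE in $S_n$ with terminal value $P_n q_T - P_n q_T = 0$ and driver $P_n\big(f(t,\cdot,q,r) - f(t,\cdot,q_n,r_n)\big) - A(q - P_n q)$ — the last term because $A$ does not commute with $P_n$ on general elements, but note $P_n A q_n = A_n q_n$ on $S_n$, so the genuinely extra forcing is $P_n A q^\perp$, which on eigenfunctions equals $-\sum_{i>n}\lambda_i \langle q,\phi_i\rangle \phi_i$ projected back — in fact this term vanishes since $P_n A(I-P_n) = 0$. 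So the only inhomogeneity driving $P_n q - q_n$ is the difference of drivers, and the whole error is controlled once we control $q^\perp$ and $r^\perp$ and close a Gronwall loop.

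Next I would apply Itô's formula to $\|P_n q(t) - q_n(t)\|_{L^2(D)}^2$ on $[t,T]$. The $-A_n$ term produces a good (dissipative) term $-2\int_t^T \|P_n q - q_n\|_{H_0^1}^2\,ds$ after integration by parts, the martingale term has zero expectation, and $\|r - r_n\|$ appears from the $dW$ part; after using the Lipschitz assumption in (A1) on $f$ (with constant $L$), Young's inequality to absorb the $H_0^1$ and $L^2$ cross terms, and Gronwall, one gets
\begin{equation*}
\me\sup_{t}\|P_n q(t)-q_n(t)\|_{L^2}^2 + \me\int_0^T \|P_n q - q_n\|_{H_0^1}^2 + \|P_n r - r_n\|_{L^2}^2\,dt \le C\Big(\me\|q^\perp\|_{\cdots}^2 + \me\int_0^T \|q^\perp\|_{H_0^1}^2 + \|r^\perp\|_{L^2}^2\,dt\Big),
\end{equation*}
the right side being exactly the tail of the eigenexpansion. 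Then the triangle inequality $\|q - q_n\| \le \|q^\perp\| + \|P_n q - q_n\|$ (and similarly for $r$) reduces everything to estimating the projection tails $\|(I-P_n)q\|$ and $\|(I-P_n)r\|$ in the relevant norms.

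The heart of the matter is the spectral bound: for $v \in H_0^1(D)$ one has $\|(I-P_n)v\|_{L^2}^2 = \sum_{i>n}\langle v,\phi_i\rangle^2 \le \lambda_{n+1}^{-1}\sum_{i>n}\lambda_i\langle v,\phi_i\rangle^2 \le \lambda_{n+1}^{-1}\|v\|_{H_0^1}^2$, and for $v \in H^2(D)\cap H_0^1(D)$ one gains two derivatives, $\|(I-P_n)v\|_{H_0^1}^2 \le \lambda_{n+1}^{-1}\|v\|_{H^2}^2$. Applying the first to $r$ (which lies in $H_0^1$ by Theorem \ref{bspde solution}) and the second to $q$ (which lies in $H^2 \cap H_0^1$), together with the a priori estimate in Theorem \ref{bspde solution} bounding $\me\int_0^T \|q\|_{H^2}^2 + \|r\|_{H_0^1}^2\,ds$ and $\me\sup_t\|q\|_{H_0^1}^2$ by the data norms $\|q_T\|_{L^2_{\mf_T}(\O;H_0^1)}^2 + \|f(\cdot,0,0)\|_{L^2_\mathbb{F}(\O\times(0,T);H_0^1)}^2$, gives the claimed $1/\lambda_{n+1}$ rate. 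I would also need to handle the $\sup_t\|q^\perp(t)\|_{L^2}^2$ term: since $q \in L^2_\mathbb{F}(\O;C([0,T];L^2))$ and in fact $q(t)\in H_0^1$ with $\me\sup_t\|q(t)\|_{H_0^1}^2 < \infty$, the spectral bound applies pathwise in $t$ and the supremum passes through.

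The main obstacle is the bookkeeping in the Itô estimate — specifically, making sure the $\|q^\perp\|_{H_0^1}$ contribution (which is only $O(\lambda_{n+1}^{-1/2})$ in $H_0^1$, hence its square is $O(\lambda_{n+1}^{-1})$, matching) does not get mishandled when it multiplies the error in a cross term; one must split off a full $\|P_n q - q_n\|_{H_0^1}^2$ from the dissipative term and not over-spend it, and route the leftover into the right-hand data term. A secondary subtlety is that $P_n$ and $A$ genuinely fail to commute on $H^2\cap H_0^1$, but since the $\phi_i$ are eigenfunctions the commutator acts cleanly on the eigenbasis and the relevant cross terms between $S_n$ and $S_n^\perp$ vanish, so this is more of a notational hazard than a real difficulty. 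Everything else — Gronwall, Young, Burkholder for the $\sup$ term — is routine given Lemma \ref{luck} and Theorem \ref{bspde solution}.
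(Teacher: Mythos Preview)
Your proposal is correct and follows the same overall architecture as the paper --- It\^o energy estimate, Lipschitz bound on the driver, Gronwall, spectral tail bounds, and Burkholder--Davis--Gundy for the $\sup$ --- but the decomposition is different. You split the error orthogonally as $q-q_n=(P_nq-q_n)+(I-P_n)q$ and run the energy estimate on the $S_n$-component, which enjoys zero terminal data; the paper instead applies It\^o directly to $\|q-q_n\|_{L^2(D)}^2$ without decomposing, using that $A_nq_n=Aq_n$ on $S_n$ so that the dissipative term is exactly $-2\langle A(q-q_n),q-q_n\rangle=2\|q-q_n\|_{H_0^1}^2$. The advantage of the paper's route is that the $H_0^1$-error term falls out of the energy identity for free, and the only spectral input needed is $\|(I-P_n)v\|_{L^2}^2\le\lambda_{n+1}^{-1}\|v\|_{H_0^1}^2$ applied to $q$, $r$, and $f(\cdot,0,0)$; it never touches $\|(I-P_n)q\|_{H_0^1}$ or the $H^2$-regularity of $q$. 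Your route trades this for a cleaner right-hand side (zero terminal datum, only the Lipschitz coupling drives the error) at the cost of one extra spectral estimate $\|(I-P_n)q\|_{H_0^1}^2\le\lambda_{n+1}^{-1}\|q\|_{H^2}^2$, which is available from Theorem~\ref{bspde solution} anyway. Incidentally, the ``main obstacle'' you flag --- a cross term involving $\|q^\perp\|_{H_0^1}$ during the It\^o estimate --- does not actually arise in your own scheme: the driver in the $S_n$-BSDE only produces $L^2$-norms via Lipschitz, and $\|q^\perp\|_{H_0^1}$ enters only at the very end through the triangle inequality. Both approaches give the same rate with the same dependencies.
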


\begin{proof} We divide the proof into two steps.

\textbf{Step 1.} Applying It\^{o}'s formula to ${\lan
q-q_n,q-q_n\ran}_{L^2(D)}$,  for any $t\in[0,T]$, we have
\begin{equation}\label{bspde 10}
\begin{aligned}
 & \|q(T)-q_n(T)\|_{L^2(D)}^2 -\|q(t)-q_n(t)\|_{L^2(D)}^2\\
 =&\displaystyle
  -2\int_t^T{\lan Aq-A_nq_n,q-q_n\ran}_{L^2(D)}ds+2\int_t^T{\lan f(q,r)-P_nf(q_n,r_n),q-q_n\ran}_{L^2(D)}ds\\[3mm]
   &+2\int_t^T{\lan q-q_n,r-r_n\ran}_{L^2(D)}dW(s)+\int_t^T\|r-r_n\|^2_{L^2(D)}ds\\
\end{aligned}
\end{equation}
\begin{equation*}
\begin{aligned}
=& -2\int_t^T{\lan A(q-q_n),q-q_n\ran}_{L^2(D)} ds+2\int_t^T{\lan f(q,r)-P_nf(q_n,r_n),q-q_n\ran}_{L^2(D)}ds\\
   &+2\int_t^T{\lan q-q_n,r-r_n\ran}_{L^2(D)}dW(s)+\int_t^T\|r-r_n\|^2_{L^2(D)}ds.\\
\end{aligned}
\end{equation*}
Taking expectation in \eqref{bspde 10}, using assumption \textbf{(A1)} we obtain that
\begin{equation}\label{bspde 14}
\begin{aligned}
 & \me\|q(t)-q_n(t)\|_{L^2(D)}^2 +\me\int_t^T\|r-r_n\|^2_{L^2(D)}ds-2\me\int_t^T{\lan A(q-q_n),q-q_n\ran}_{L^2(D)} ds\\[2mm]
 =&\displaystyle
  \me\|q(T)-q_n(T)\|_{L^2(D)}^2-2\me\int_t^T{\lan f(q,r)-P_nf(q_n,r_n),q-q_n\ran}_{L^2(D)}ds\\
 =&\me\|q(T)-q_n(T)\|_{L^2(D)}^2-2\me\int_t^T{\lan f(s,q,r)-P_nf(s,q,r),q-q_n\ran}_{L^2(D)}ds\\
&-2\me\int_t^T{\lan P_nf(s,q,r)-P_nf(s,q_n,r),q-q_n\ran}_{L^2(D)}ds\\
&-2\me\int_t^T{\lan P_nf(s,q_n,r)-P_nf(s,q_n,r_n),q-q_n\ran}_{L^2(D)}ds\\
\leq&\me\|q(T)-q_n(T)\|_{L^2(D)}^2+2L\me\int_t^T\|(I-P_n)q\|_{L^2(D)}\|q-q_n\|_{L^2(D)}ds\\
&+2L\me\int_t^T\|(I-P_n)r\|_{L^2(D)}\|q-q_n\|_{L^2(D)}ds\\
&+2\me\int_t^T\|(I-P_n)f(s,0,0)\|_{L^2(D)}\|q-q_n\|_{L^2(D)}ds\\
&+\me\int_t^T(2L+1)\|q-q_n\|^2_{L^2(D)}+\|r-r_n\|^2_{L^2(D)}ds\\
\leq&TL\max_{t\in[0,T]}\me\|(I-P_n)q(t)\|^2_{L^2(D)}
+L\me\int_t^T\|(I-P_n)r(s)\|^2_{L^2(D)}ds\\
&+\me\int_t^T\|(I-P_n)f(s,0,0)\|^2_{L^2(D)}ds
+(4L+2)\me\int_t^T\|q-q_n\|^2_{L^2(D)}ds\\
&\me\int_t^T\|r-r_n\|^2_{L^2(D)}ds.\\
\end{aligned}
\end{equation}
Since for any $t\in[0,T]$,
$$-2\me\int_t^T{\lan A(q-q_n),q-q_n\ran}_{L^2(D)} ds\geq 0,$$
by Gronwall's inequality, one can easily check that
\begin{equation}\label{bspde 15}
\begin{aligned}
     & \sup_{t\in[0,T]}\me\|q(t)-q_n(t)\|^2_{L^2(D)}\\
\displaystyle
\leq & \displaystyle
e^{(2L+1)T}\[\me\|q(T)-q_n(T)\|^2_{L^2(D)}
 +TL\max_{t\in[0,T]}\me\|(I-P_n)q(t)\|^2_{L^2(D)}\\
&\,\,+L\me\int_0^T\|(I-P_n)r(t)\|^2_{L^2(D)}dt
+\me\int_0^T\|(I-P_n)f(t,0,0)\|^2_{L^2(D)}dt\].\\
\end{aligned}
\end{equation}

Set $q(\cdot)=\sum_{j=1}^{\infty}\mu_{j}(\cdot)\phi_j$. Then
$q_n(\cdot)=P_nq(\cdot)=\sum_{j=1}^{n}\mu_{j}(\cdot)\phi_j$. Hence
\begin{equation}\label{bspde 12}
\begin{aligned}
     & \me\|q(\cdot)-q_n(\cdot)\|^2_{L^2(D)}=\me\|(I-P_n)q(\cdot)\|^2_{L^2(D)}
 =  {\sum_{i=n+1}^{\infty}\me|\mu_i(\cdot)|^2}\leq
       {\frac{1}{\l_{n+1}}\sum_{i=n+1}^{\infty}\l_i\me|\mu_i(\cdot)|^2}\\
\leq &  {\frac{1}{\l_{n+1}}\sum_{i=1}^{\infty}\l_i\me|\mu_i(\cdot)|^2}
\leq   {\frac{1}{\l_{n+1}}\me\|q(\cdot)\|^2_{H_0^1(D)}}
=   {\frac{1}{\l_{n+1}}}\|q(\cdot)\|^2_{L^2_{\mf_\cdot}(\O;H_0^1(D))}.%
\end{aligned}
\end{equation}
Similarly, 
\begin{equation}\label{bspde 13}
\begin{aligned}
\me\int_0^T\|(I-P_n)r(t)\|^2_{L^2(D)}dt
&\leq {\frac{1}{\l_{n+1}}}\me\int_0^T\|r(t)\|^2_{H_0^1(D)}dt,\\
\me\int_0^T\|(I-P_n)f(t,0,0)\|^2_{L^2(D)}dt
&\leq {\frac{1}{\l_{n+1}}}\me\int_0^T\|f(t,0,0)\|^2_{H_0^1(D)}dt.
\end{aligned}
\end{equation}
From \eqref{bspde 14}--\eqref{bspde 13} and Theorem \ref{bspde solution}, we have
\begin{equation}\label{bspde 16}
\begin{aligned}
&\sup_{t\in[0,T]}\me\|q(t)-q_n(t)\|^2_{L^2(D)}\\
\leq&
\frac{e^{3(2L+1)T}}{\l_{n+1}}\[\|q(T)\|^2_{L^2_{\mf_T}(\O;H_0^1(D))}+TL\max_{t\in[0,T]}\|q(t)\|^2_{L^2_{\mf_t}(\O;H_0^1(D))}\\
&\q\q+L\|r\|^2_{L^2_{\mathbb{F}}(\O\times(0,T);H_0^1(D))}
                        +\|f(\cdot,0,0)\|^2_{L^2_{\mathbb{F}}(\O\times(0,T);H_0^1(D))}\]\\
\leq & \frac{C}{\l_{n+1}}\[\|q(T)\|^2_{L^2_{\mf_T}(\O;H_0^1(D))}
       +\|f(\cdot,0,0)\|^2_{L^2_{\mathbb{F}}(\O\times(0,T);H_0^1(D))}\],\\
\end{aligned}
\end{equation}
where $C$ is a constant independently on $\l_n$. By the same argument, we obtain that
\begin{equation}\label{bspde 17}
\begin{aligned}
                        &\me\int_0^T\|r(t)-r_n(t)\|^2_{L^2(D)}dt-2\me\int_0^T{\lan A(q-q_n)(t),(q-q_n)(t)\ran}_{L^2(D)} dt\\
\leq&\frac{C}{\l_{n+1}}\[\|q(T)\|^2_{L^2_{\mf_T}(\O;H_0^1(D))}
    +\|f(\cdot,0,0)\|^2_{L^2_{\mathbb{F}}(\O\times(0,T);H_0^1(D))}\].\\
 \end{aligned}
 \end{equation}

\textbf{Step 2.}
Using \eqref{bspde 10} once more and from similar proceeding as that in Step 1, by the Burkholder-Davis-Gundy inequality, we have
\begin{equation}\label{bspde 11}
\begin{aligned}
 &\me \sup_{t\in[0,T]}\|q(t)-q_n(t)\|_{L^2(D)}^2\\
 =&
  \me\sup_{t\in[0,T]}\Bigg\{\|q(T)-q_n(T)\|_{L^2(D)}^2+2\int_t^T{\lan A(q-q_n),q-q_n\ran}_{L^2(D)} ds\\
   & -2\int_t^T{\lan f(q,r)-P_nf(q_n,r_n),q-q_n\ran}_{L^2(D)}ds\\
   &-2\int_t^T{\lan q-q_n,r-r_n\ran}_{L^2(D)}dW(s)
    -\int_t^T\|r-r_n\|^2_{L^2(D)}ds\Bigg\}\\
\end{aligned}
\end{equation}
\begin{equation*}
\begin{aligned}
   \leq&\me\|q(T)-q_n(T)\|_{L^2(D)}^2+TL\max_{t\in[0,T]}\me\|(I-P_n)q(t)\|^2_{L^2(D)}\\
   &+L\me\int_t^T\|(I-P_n)r(s)\|^2_{L^2(D)}ds
      +\me\int_t^T\|(I-P_n)f(s,0,0)\|_{L^2(D)}^2ds\\
&+\me\int_t^T(4L+2)\|q-q_n\|^2_{L^2(D)}ds+2\me\sup_{t\in[0,T]}\int_t^T{\lan q-q_n,r-r_n\ran}_{L^2(D)}dW(s)\\
\leq&\me\|q(T)-q_n(T)\|_{L^2(D)}^2
    +TL\max_{t\in[0,T]}\me\|(I-P_n)q(t)\|^2_{L^2(D)}\\
&+L\me\int_t^T\|(I-P_n)r(s)\|^2_{L^2(D)}ds
   +\me\int_t^T\|(I-P_n)f(s,0,0)\|^2_{L^2(D)}ds\\
&+\me\int_t^T(4L+2)\|q-q_n\|^2_{L^2(D)}
+6\me\[\int_0^T{\lan q-q_n,r-r_n\ran}_{L^2(D)}^2dt\]^{1/2}\\
\leq&\me\|q(T)-q_n(T)\|_{L^2(D)}^2+TL\max_{t\in[0,T]}\me\|(I-P_n)q(t)\|^2_{L^2(D)}\\
&+L\me\int_t^T\|(I-P_n)r(s)\|^2_{L^2(D)}ds+\me\int_t^T\|(I-P_n)f(s,0,0)\|^2_{L^2(D)}ds\\
&+(4L+2)\me\int_t^T\|q-q_n\|^2_{L^2(D)}ds+18\me\int_t^T\|r-r_n\|^2_{L^2(D)}ds\\
&+\frac{1}{2}\me\sup_{t\in[0,T]}\|q(t)-q_n(t)\|_{L^2(D)}^2.\\
\end{aligned}
\end{equation*}
From \eqref{bspde 12}--\eqref{bspde 11} and Theorem \ref{bspde solution}, we obtain
\begin{equation*}\label{bspde 11s}
\begin{aligned}
 \me \sup_{t\in[0,T]}\|q(t)-q_n(t)\|_{L^2(D)}^2
 \leq\frac{C}{\l_{n+1}}\[\|q(T)\|^2_{L^2_{\mf_T}(\O;H_0^1(D))}+\|f(\cdot,0,0)\|^2_{L^2_{\mathbb{F}}(\O\times(0,T);H_0^1(D))}\],\\
 \end{aligned}
\end{equation*}
where $C$ depends only on $T$, $L$ and $D$. This completes the proof.
\end{proof}
\medskip




\begin{remark}
Under the assumptions of Theorem \ref{convergence speed}, applying the results in \cite[Section 4.1]{Tessitore}, we can prove a better convergence result than that in Theorem \ref{convergence speed}, i.e.,
$$(q_n,\,r_n)\longrightarrow (q,\,r)\q \mbox{in}\q L^2_{\mathbb{F}}(\O\times(0,T);H^2(D))\times L^2_{\mathbb{F}}(\O\times(0,T);H^1(D)).$$
However, the convergence speed (given in Theorem \ref{convergence speed}) cannot be improved.

\end{remark}

\section{Approximation for BSDE}

In this section, we apply the backward Euler scheme to solve Eq. \eqref{eq:aaa}. We borrow
some idea from \cite{Hu-Nualart-Song,Zhang04}.

Since $S_n=\mbox{span}\{\phi_1,\cdots,\phi_n\}$, and the solution $(q_n,r_n)$ to Eq. \eqref{eq:aaa} is in $L^2_{\mathbb{F}}(\O\times(0,T);S_n)\times L^2_{\mathbb{F}}(\O\times(0,T);S_n)$, we may take $(q_n,r_n)$ to be the following form
\begin{equation}\label{expansion 1}
\begin{aligned}
q_n(\cdot)=\sum_{j=1}^n\a_{n,j}(\cdot)\phi_j,\,~
r_n(\cdot)=\sum_{j=1}^n\b_{n,j}(\cdot)\phi_j,
\end{aligned}
\end{equation}
where $\alpha_{n,j}\in L^2_{\mathbb{F}}(\O;C([0,T]; \R))$ and $\beta_{n,j}\in L^2_{\mathbb{F}}(\O\times(0,T); \R).$
Set
\begin{equation*}
\begin{aligned}
\alpha_n(\cdot)=\begin{pmatrix} \alpha_{n,1}(\cdot)\\\alpha_{n,2}(\cdot)\\\vdots
\\\alpha_{n,n}(\cdot)
\end{pmatrix},
\q \beta_n(\cdot)=\begin{pmatrix} \beta_{n,1}(\cdot)\\\beta_{n,2}(\cdot)\\\vdots
\\\beta_{n,n}(\cdot)
\end{pmatrix},
\q \L=\begin{pmatrix}
\l_1& 0  &\cdots &0\\
0   & \l_2&\cdots&0\\
\vdots&\vdots&\ddots&\vdots\\
0&  0     &\cdots &\l_n\\
\end{pmatrix},\\
\end{aligned}
\end{equation*}
and $\ds f_n(\cdot,\alpha_n(\cdot),\beta_n(\cdot))=\begin{pmatrix}
\lan f(\cdot,q_n(\cdot),r_n(\cdot)),\phi_1 \ran_{L^2(D)}\\\lan f(\cdot,q_n(\cdot),r_n(\cdot)),\phi_2 \ran_{L^2(D)}\\\vdots\\\lan f(\cdot,q_n(\cdot),r_n(\cdot)),\phi_n \ran_{L^2(D)}\\
\end{pmatrix}.$\\
Then $(\alpha_n(\cdot),\beta_n(\cdot))$ solves the following BSDE:
\begin{equation}\label{bsden}
\left\{
\begin{aligned}
d\alpha_n(t)&=\big(\L \alpha_n(t)+f_n(t,\alpha_n(t),\beta_n(t))\big)dt+\beta_n(t)dW(t), \q t\in [0,T], \\
 \displaystyle
 \alpha_n(T)&=\alpha_{nT}:=\big({\lan q_T,\phi_1 \ran}_{L^2(D)},\,{\lan q_T,\phi_2 \ran}_{L^2(D)},\,\cdots,\,{\lan q_T,\phi_n \ran}_{L^2(D)}\big)^*.
\end{aligned}
\right.
\end{equation}

Before presenting the main theorem, 
we study the regularity of $\alpha_{n}(\cdot)$ and $\beta_n(\cdot)$. In the sequel, for simplicity, we write $ \partial_y f_n(\cdot,\a_n(\cdot),\b_n(\cdot)), \, \partial_z f_n(\cdot,\a_n(\cdot),\b_n(\cdot)) $ by $\partial_{y} f_n(\cdot),\,\partial_{z} f_n(\cdot)$ respectively. Similarly, we may use the notations $\partial_{yy}f_n(\cdot)$, $\partial_{yz}f_n(\cdot)$ and $\partial_{zz}f_n(\cdot)$.

\begin{lemma}\label{alpha}
Let $\bf{(A1)}$ and $\bf{(A2)}$ hold.
Then, for any $t,s \in [0,T]$,
$$\me|\alpha_{n}(t)-\alpha_{n}(s)|^2\leq C(1+\l_n^2|t-s|)|t-s|,$$
where $C$ is a constant depending only on $q,\,T$, $L$ and $M$.

\end{lemma}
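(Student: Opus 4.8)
The plan is to estimate $\me|\alpha_n(t)-\alpha_n(s)|^2$ by writing the increment using the integral form of the BSDE \eqref{bsden} and then applying the a priori bounds on $\alpha_n$ and $\beta_n$. Since \eqref{bsden} runs backward, for $s<t$ I would write
$$\alpha_n(t)-\alpha_n(s)=-\int_s^t\big(\L\alpha_n(u)+f_n(u,\alpha_n(u),\beta_n(u))\big)du-\int_s^t\beta_n(u)\,dW(u),$$
so that $\me|\alpha_n(t)-\alpha_n(s)|^2\le 2\me\big|\int_s^t(\L\alpha_n+f_n)\,du\big|^2+2\me\int_s^t|\beta_n(u)|^2du$ by the It\^o isometry. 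The deterministic-integral term is handled by Cauchy–Schwarz in $u$, which produces the factor $|t-s|$, and then the integrand is bounded using $\|\L\|=\l_n$ (this is where the $\l_n^2$ comes from) together with the linear growth of $f_n$ inherited from the uniform Lipschitz bound $L$ on $\partial_y f,\partial_z f$ in \textbf{(A1)} and $f(\cdot,\cdot,0,0)\in L^2(0,T;H_0^1(D))$. For the stochastic term I would use a uniform bound $\sup_{u\in[0,T]}\me|\beta_n(u)|^2\le C$, which gives the additional $|t-s|$ factor, leading to the claimed $C(1+\l_n^2|t-s|)|t-s|$.

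The key preliminary fact needed is that $\sup_{u\in[0,T]}\big(\me|\alpha_n(u)|^2+\me|\beta_n(u)|^2\big)$ is bounded by a constant depending only on $q,T,L,M$ and \emph{independent of $n$} in the relevant sense, or at worst with the $\l_n$-dependence already made explicit. This follows from applying Lemma \ref{luck} (with the roles of forward/backward adjusted) or, more directly, from Theorem \ref{bspde solution} and Theorem \ref{convergence speed}: since $q_T\in L^2_{\mf_T}(\O;H_0^1(D))$ by \textbf{(A2)}, Theorem \ref{bspde solution} gives $(q_n,r_n)$ bounded in $L^2_{\mathbb{F}}(\O;C([0,T];L^2(D)))\times L^2_{\mathbb{F}}(\O\times(0,T);L^2(D))$ with constants independent of $n$, and because $\{\phi_i\}$ is orthonormal in $L^2(D)$ we have $|\alpha_n(u)|=\|q_n(u)\|_{L^2(D)}$ and $|\beta_n(u)|=\|r_n(u)\|_{L^2(D)}$. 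Thus $\sup_u\me|\alpha_n(u)|^2<\infty$ uniformly; for the pointwise-in-time bound on $\me|\beta_n(u)|^2$ one uses the stronger regularity of $r_n$ that comes from the Malliavin-type hypotheses in \textbf{(A2)} (the process $\beta_n$ is a version of the Malliavin derivative trace of $\alpha_n$), so that $\sup_{u}\me|\beta_n(u)|^2\le C$.

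The main obstacle I anticipate is obtaining the uniform-in-time bound $\sup_{u\in[0,T]}\me|\beta_n(u)|^2\le C$ with $C$ depending only on $q,T,L,M$: the process $r_n$ is a priori only in $L^2_{\mathbb{F}}(\O\times(0,T);S_n)$, i.e., square-integrable in time but not necessarily pointwise-in-time bounded, so the naive argument gives only a time-averaged bound. This is exactly why assumption \textbf{(A2)} imposes $q_T\in\mathbb{D}^{2,q}(L^2(D))$ with $q>4$, Lipschitz continuity of $\theta\mapsto D_\theta q_T$, and uniform $L^q$ bounds on the first and second Malliavin derivatives: the standard representation $\beta_n(t)=D_t\alpha_n(t)$ (valid since the data are Malliavin differentiable and $f_n$ is $C^2$ in $(y,z)$ with bounded derivatives) turns the bound on $\me|\beta_n(t)|^2$ into a bound on the Malliavin derivative of the BSDE solution, which is itself the solution of a linear BSDE with coefficients $\L$, $\partial_yf_n$, $\partial_zf_n$; applying Lemma \ref{luck} to that linear BSDE and using the $L^q$-moment bounds from \textbf{(A2)} yields $\sup_t\me|\beta_n(t)|^2\le C$ with the stated dependence. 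Once this uniform bound is in hand, the time-increment estimate is routine: split into the $du$-integral (Cauchy–Schwarz, factor $|t-s|$, with $\l_n$ appearing through $\|\L\|$ and hence $\l_n^2|t-s|$ after squaring) and the $dW$-integral (It\^o isometry, factor $|t-s|$), and combine.
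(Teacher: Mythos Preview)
Your approach is the paper's: write the increment from \eqref{bsden}, bound the drift integral by Cauchy--Schwarz in $u$ (producing the factor $|t-s|$ and the $\l_n^2$ via $\|\L\|=\l_n$) and the stochastic integral by It\^o isometry, then reduce everything to the pointwise bound $\sup_{u}\me|\beta_n(u)|^2\le C$ obtained from the representation $\beta_n(t)=D_t\alpha_n(t)$ and the linear BSDE \eqref{M1} for the Malliavin derivative, with terminal data controlled by \textbf{(A2)}. (The sign in your displayed increment is wrong---the BSDE \eqref{bsden} has $d\alpha_n=(\L\alpha_n+f_n)\,dt+\beta_n\,dW$---but this is harmless after squaring.)

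There is one point you slide over that is essential for the constant to have the \emph{stated} dependence. The linear BSDE \eqref{M1} for $D_\theta\alpha_n$ has drift $\L D_\theta\alpha_n+\partial_yf_n\,D_\theta\alpha_n+\partial_zf_n\,D_\theta\beta_n$, and $\|\L\|=\l_n$. Invoking Lemma~\ref{luck} ``with forward/backward adjusted'', or any generic $L^q$ BSDE estimate that treats the generator merely as Lipschitz, would yield a constant of order $e^{c\l_n T}$, not one depending only on $q,T,L,M$. What makes the constant $n$-independent is the positive-definiteness of $\L$: in the backward energy estimate for $|D_\theta\alpha_n|^p$ the term $\langle D_\theta\alpha_n,\L D_\theta\alpha_n\rangle\ge 0$ appears with a favorable sign and can be discarded, so only the uniformly bounded $\partial_yf_n,\partial_zf_n$ enter the Gronwall exponent. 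The paper records this as \eqref{xiao} (via \cite{Hu-Nualart-Song}), and the explicit use of $\L\ge 0$ is spelled out in the proof of the companion Lemma~\ref{beta}. Once \eqref{xiao} gives $\me|\beta_n(t)|^2\le C\big(\me|D_t\alpha_n(T)|^q\big)^{2/q}\le C(q,T,L,M)$, your remaining computation goes through exactly as in the paper.
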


\begin{proof}
Under the assumptions on $q_T$ and $f$, by \cite[Theorem 2.6]{Hu-Nualart-Song},
$(\alpha_{n},\beta_{n})$ is in $\mathbb{L}^{1,2}_a(\R^n)$. Besides, the Malliavin derivative
$(D_{\theta}\alpha_{n},D_{\theta}\beta_{n})$ of the solution pair solves the following BSDE:
\begin{equation}\label{M1}
\left\{
\begin{aligned}
D_{\theta}\alpha_{n}(T)-D_{\theta}\alpha_{n}(t)&=\int_t^T\big(\L D_{\theta}\alpha_{n}(s)+\partial_yf_n(s)D_{\theta}\alpha_n(s)
+\partial_zf_n(s)D_{\theta}\beta_n(s)\big)\\
&\qq+\int_t^T D_{\theta}\beta_{n}(s)dW(s), \q 0\leq\theta\leq t\leq T;\\
D_{\theta}\alpha_{n}(t)=D_{\theta}\beta_{n}(t)&=0, \q 0\leq t< \theta\leq T.
\end{aligned}
\right.
\end{equation}
Moreover, $\beta_{n}(\cdot)$ can be represented by $D_{\cdot}\alpha_{n}(\cdot)$. Furthermore, 
\begin{equation}\label{xiao}
\begin{aligned}
\sup_{0\leq\theta\leq T}\me \sup_{\theta\leq t\leq T}|D_{\theta}\alpha_n(t)|^q+\me\bigg(\int_{\theta}^T|D_{\theta}\beta_n(t)|^2dt\bigg)^{q/2}
\leq C\me|D_{\theta}\alpha_n(T)|^q,
\end{aligned}
\end{equation}
where $C$ depends only on $q,\,T$ and $L$.

By above inequality,  we have
\begin{equation}\label{M2}
\me|\beta_{n}(t)|^2=\me|D_{t}\alpha_{n}(t)|^2\leq \big(\me|D_{t}\alpha_{n}(t)|^q\big)^{2/q}\leq C \big(\me|D_{t}\alpha_{n}(T)|^q\big)^{2/q},\q \ae t\in [0,T].
\end{equation}
%
By \eqref{bsden}, we also know that
\begin{equation}\label{M3}
\me|\alpha_{n}(t)|^2+\me\int_t^T|\beta_{n}(s)|^2ds\leq C \Big(\me|\alpha_{n}(T)|^2+\me\int_t^T|f_n(s,0,0)|^2ds\Big).
\end{equation}
Here $C$ is a constant depending only on $L$ and $T$.
By virtue of \eqref{M2} and \eqref{M3}, for $0\leq s\leq t\leq T$, we can easily have
\begin{equation*}
\begin{aligned}
&\me|\alpha_{n}(t)-\alpha_{n}(s)|^2=\me\bigg|\int_s^t(\L\alpha_{n}(u)+f_n(u,\alpha_n(u),\beta_n(u))du+\int_s^t\beta_{n}(u)dW(u))\bigg|^2\\
\leq&\Big(4\me\int_s^t\l_n^2|\alpha_{n}(u)|^2du+16L\int_s^t(|\alpha_n(u)|^2+|\beta_n(u)|^2)du+8\me\int_s^t f_{n}(u,0,0)|^2du\Big)(t-s)\\
&\q+2\me\int_s^t\beta_{n}^2(u)du\\
\leq&\big(4\l_n^2(t-s)+16L(t-s+1)+8\big)C\Big(\me|\alpha_{n}(T)|^2+\me\int_t^T|f_n(s,0,0)|^2ds\Big)(t-s)\\
&+2C\me|D_{t}\alpha_{n}(T)|^2(t-s)\\
\leq & C(1+\l_n^2(t-s))(t-s),
\end{aligned}
\end{equation*}
where $C$ depends only on $q,\,L$, $T$ and $M$.
This completes the proof.
\end{proof}

The following lemma is about the regularity of $\beta_{n}(\cdot)$. 
\begin{lemma}\label{beta}
Let $\bf{(A1)}$ and $\bf{(A2)}$ hold.
Then, for any $t,s \in [0,T]$,
\begin{equation}\label{beta10}
\begin{aligned}
\me|\beta_{n}(t)-\beta_{n}(s)|^2\leq Ce^{\l_n |t-s|}|t-s|(\l_n^2|t-s|+1),
\end{aligned}
\end{equation}
where $C$ is a constant depending only on $q,\, T$, $L$ and $M$.

\end{lemma}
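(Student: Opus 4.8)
The plan is to trace the time regularity of $\b_n$ back, through the Malliavin calculus, to the second-order Malliavin regularity of the terminal datum. The key starting point is the representation $\b_n(t)=D_t\a_n(t)$ recorded in the proof of Lemma~\ref{alpha}, which expresses the increment of $\b_n$ as a diagonal value of the two-parameter process $(\theta,u)\mapsto D_\theta\a_n(u)$, $\theta\le u$. Assuming $s\le t$ without loss of generality, I would write
\[
\b_n(t)-\b_n(s)=\bigl(D_t\a_n(t)-D_s\a_n(t)\bigr)+\bigl(D_s\a_n(t)-D_s\a_n(s)\bigr)=:\mathrm{I}+\mathrm{II},
\]
so that $\mathrm{I}$ measures sensitivity of the Malliavin derivative to its index and $\mathrm{II}$ is the genuine time increment of the fixed-index process $D_s\a_n(\cdot)$ over $[s,t]$.

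For $\mathrm{I}$, I would note that on the common interval $[t,T]$ both $D_t\a_n(\cdot)$ and $D_s\a_n(\cdot)$ solve the linear backward equation \eqref{M1} with the \emph{same} coefficients $\L,\partial_yf_n(\cdot),\partial_zf_n(\cdot)$, differing only in their terminal data $D_t\a_{nT}$ and $D_s\a_{nT}$; hence their difference solves a linear BSDE with vanishing generator free term and terminal value $D_t\a_{nT}-D_s\a_{nT}$. Applying It\^o's formula to its squared norm (exactly as in Step~1 of the proof of Theorem~\ref{convergence speed}), or using the variation-of-constants representation of this linear equation --- which is contractive since the eigenvalues $\l_i$ are positive --- together with the uniform bound $L$ on $\partial_yf_n,\partial_zf_n$, yields $\me|\mathrm{I}|^2\le C\,\me|D_t\a_{nT}-D_s\a_{nT}|^2\le C\,\me\|D_tq_T-D_sq_T\|_{L^2(D)}^2$, the last step because $P_n$ is a contraction. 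The regularity of $\theta\mapsto D_\theta q_T$ built into \textbf{(A2)} then bounds the right-hand side by $C|t-s|$.

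For $\mathrm{II}$, rewriting \eqref{M1} on $[s,t]$ gives
\[
D_s\a_n(t)-D_s\a_n(s)=\int_s^t\!\bigl(\L D_s\a_n+\partial_yf_nD_s\a_n+\partial_zf_nD_s\b_n\bigr)(r)\,dr+\int_s^t\! D_s\b_n(r)\,dW(r).
\]
Squaring, taking expectation, applying the Cauchy--Schwarz inequality to the drift term (producing a factor $t-s$ and the operator norm $\|\L\|^2=\l_n^2$) and It\^o's isometry to the stochastic term, the problem reduces to bounding $\me\int_s^t|D_s\a_n(r)|^2\,dr$ and $\me\int_s^t|D_s\b_n(r)|^2\,dr$ by $C(t-s)$. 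The first follows from \eqref{xiao} together with \textbf{(A2)}, which give $\sup_r\me|D_s\a_n(r)|^2\le C$. The second I would obtain from the identity $D_s\b_n(r)=D_sD_r\a_n(r)$ and the second-order Malliavin estimate provided by \cite[Theorem~2.6]{Hu-Nualart-Song} (valid because $q_T\in\mathbb D^{2,q}$ and $f$ has bounded second derivatives, with the first Malliavin derivatives already controlled uniformly by \eqref{xiao}), which gives $\sup_r\me|D_s\b_n(r)|^2\le C$. This yields $\me|\mathrm{II}|^2\le C(t-s)\bigl(\l_n^2(t-s)+1\bigr)$; combining it with the bound for $\mathrm{I}$ gives the stated estimate. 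The factor $e^{\l_n|t-s|}$ in the statement is the price of a Gronwall step over the short interval $[s,t]$ whose drift carries the operator norm $\l_n$ (for instance in bootstrapping the time increment, or inside the second-order estimate, without isolating the dissipativity of $\L$), and is harmless since $e^{\l_n|t-s|}\ge1$.

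The main obstacle is the second-order Malliavin analysis underlying the uniform bound $\sup_{0\le s\le r\le T}\me|D_s\b_n(r)|^2\le C$: one must justify $D_s\b_n(r)=D_sD_r\a_n(r)$, identify the two-parameter linear BSDE satisfied by this second derivative (whose inhomogeneous terms involve $\partial^2f_n$ and products of the first Malliavin derivatives of $(\a_n,\b_n)$), and push the a priori estimates through with constants depending only on $q,T,L$ and $M$, keeping the whole $n$-dependence explicit --- as the factor $\l_n^2$ in the bracket and as $e^{\l_n|t-s|}$ --- rather than hidden inside the constant. This is precisely where the bookkeeping of \eqref{xiao}, \eqref{M2}, \eqref{M3} and the hypotheses of \cite[Theorem~2.6]{Hu-Nualart-Song} has to be done carefully.
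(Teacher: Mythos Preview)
Your decomposition $\beta_n(t)-\beta_n(s)=\mathrm{I}+\mathrm{II}$ and your treatment of $\mathrm{I}$ coincide exactly with the paper's. The divergence is in term $\mathrm{II}$. The paper does not write \eqref{M1} on $[s,t]$ and estimate the resulting integrals; instead it introduces the forward flows $\Psi,\Phi$ of Lemma~\ref{wang65}, obtains the closed-form representation $D_\theta\alpha_n(t)=\me\bigl(\Phi(t)\Psi(T)D_\theta\alpha_{nT}\,\big|\,\mf_t\bigr)$, and splits $\mathrm{II}$ into (i) an increment of $\Phi$, estimated by \eqref{wang45} --- this is where $e^{\l_n|t-s|}$ actually comes from --- and (ii) a martingale increment, handled by showing $\Phi(s)\Psi(T)D_\theta\alpha_{nT}\in M^{2,2}$ via the Clark--Ocone formula. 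A key feature of this route is that the second Malliavin derivative enters only through $D_tD_\theta\alpha_{nT}$, which is controlled directly by \textbf{(A2)}, and through $D_t(\Phi(s)\Psi(T))$, which satisfies a linear SDE whose inhomogeneous terms involve only the \emph{first} Malliavin derivatives $D_\theta\alpha_n,D_\theta\beta_n$ already bounded by \eqref{xiao}. No second Malliavin derivative of the solution pair $(\alpha_n,\beta_n)$ itself is ever needed.

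Your route, by contrast, hinges on the pointwise bound $\sup_{r}\me|D_s\beta_n(r)|^2\le C$ with $C$ independent of $n$, and this is where there is a gap. A black-box appeal to \cite[Theorem~2.6]{Hu-Nualart-Song} does not deliver it: the constants there depend on the Lipschitz constant of the full drift, which here is $\l_n+L$. Redoing the estimate using the dissipativity of $\L$ means analysing the second-order linear BSDE for $(D_rD_s\alpha_n,D_rD_s\beta_n)$, whose inhomogeneous part contains the term $\partial_{zz}f_n[D_r\beta_n,D_s\beta_n]$; bounding this in $L^2(\O\times[0,T])$ requires precisely a pointwise-in-time control of $D_\theta\beta_n$, which is what you are trying to establish. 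The paper's flow-based argument is engineered to sidestep this circularity by pushing all second-order Malliavin information onto the terminal datum. Your direct approach may be salvageable with a more careful bootstrap exploiting $q>4$ in \eqref{xiao}, but it is substantially more delicate than your outline indicates, and the factor $e^{\l_n|t-s|}$ is not a cosmetic Gronwall leftover --- it is the genuine price the paper pays for working with the forward flow $\Phi$ on $[s,t]$.
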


We need the following lemma to prove the above result. 
\begin{lemma}\label{wang65}
Let $\bf{(A1)}$ and $\bf{(A2)}$ hold, and $\{\Psi\}_{0\leq t\leq T}$ and $\{\Phi\}_{0\leq t\leq T}$ which solve the following SDEs
\begin{equation}\label{wang1}
\left\{
\begin{split}
d\Psi(t)&=-\Psi(t)\big(\L+\partial_yf_n(t)\big)dt-\Psi(t)\partial_zf_n(t)dW(t),\q   t\in [0,T), \\
 \displaystyle
 \Psi(0)&=I_n
\end{split}
\right.
\end{equation}
and 
\begin{equation}\label{wang2}
\left\{
\begin{split}
d\Phi(t)&=\big(\L+\partial_yf_n(t)+\big(\partial_yf_n(t)\big)^2\big)\Phi(t)dt+\partial_zf_n(t)\Phi(t)dW(t),\q   t\in [0,T), \\
 \displaystyle
 \Phi(0)&=I_n,
\end{split}
\right.
\end{equation}
respectively.  Then, for any $p\geq 2$, 
\begin{equation}\label{wang66}
\begin{aligned}
\me\sup_{0\leq t\leq T}|\Psi(t)|^p\leq C,\q\me\sup_{0\leq t\leq T}|\Phi(t)|^p\leq Ce^{\l_n T};\\
\end{aligned}
\end{equation}
\begin{equation}\label{wang41}
\begin{aligned}
\me\big|\sup_{s\leq t\leq T}\Phi(s)\Psi(t)\big|^p\leq C,\q \me|\Phi(t)\Psi(s)|^p\leq Ce^{\l_n |t-s|},\\
\end{aligned}
\end{equation}
\begin{equation}\label{wang45}
\begin{aligned}
\me|(\Phi(t)-\Phi(s))\Psi(T)|^p\leq C e^{\l_n |t-s|}|t-s|^{\frac{p}{2}}(\l_n^p|t-s|^{\frac{p}{2}}+1),
\end{aligned}
\end{equation}
where $C$ depends only on $p,\,L$ and $T$.
\end{lemma}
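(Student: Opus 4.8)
The plan is to treat the three pairs of estimates in \eqref{wang66}, \eqref{wang41}, \eqref{wang45} in order, since each one builds on the previous. The two SDEs \eqref{wang1} and \eqref{wang2} are linear with coefficients that are a deterministic matrix $\L$ (with $|\L|=\l_n$) plus the bounded-by-$L$ terms $\pr_yf_n, \pr_zf_n$ (using that $f$ has first derivatives bounded by $L$ under \textbf{(A1)}, so the matrices $\pr_yf_n(t),\pr_zf_n(t)$ inherit a bound depending only on $L$ -- one should remark this at the outset). Hence every estimate is an application of Lemma \ref{luck} to the relevant (matrix) linear SDE, with the $\l_n$-dependence tracked explicitly through the drift. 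For \eqref{wang66}: applying Lemma \ref{luck} to $\Psi$, the drift and diffusion coefficients are bounded by $\l_n+L$; writing $\Psi(t)=I_n-\int_0^t\Psi(s)(\L+\pr_yf_n)\,ds-\int_0^t\Psi(s)\pr_zf_n\,dW$ and using Lemma \ref{luck} together with Gronwall gives $\me\sup_t|\Psi(t)|^p\le C e^{c(\l_n+L)^p T}$; but one must be more careful to get the clean bound $C$ (no $\l_n$) claimed for $\Psi$. The trick is that $\Psi$ solves the \emph{backward} adjoint equation, i.e. $\Psi(t)$ is (up to the $\pr f$ perturbations) essentially $e^{-\L t}$ times a bounded factor; since $\L\ge 0$, $|e^{-\L t}|\le 1$, so the $\l_n$-growth is actually absent. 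Concretely one can introduce $\wt\Psi(t):=\Psi(t)e^{\L t}$, check it solves an SDE whose coefficients are bounded by $L$ only (the $\L$ terms cancel because $\L$ commutes with $e^{\L t}$ and with itself), apply Lemma \ref{luck} to $\wt\Psi$ to get $\me\sup_t|\wt\Psi(t)|^p\le C(p,L,T)$, and then $|\Psi(t)|\le|\wt\Psi(t)||e^{-\L t}|\le|\wt\Psi(t)|$. For $\Phi$, the analogous substitution $\wt\Phi(t):=e^{-\L t}\Phi(t)$ removes the bare $\L$, but the drift still contains $(\pr_yf_n)^2$ plus, crucially, nothing that grows with $\l_n$ -- wait, it does not; so where does $e^{\l_n T}$ come from? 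It comes from the fact that $\Phi$ has $+\L$ in the drift with the \emph{wrong} sign for decay, so $|e^{\L t}|\le e^{\l_n t}$; hence $|\Phi(t)|\le e^{\l_n t}|\wt\Phi(t)|$ and $\me\sup_t|\wt\Phi(t)|^p\le C(p,L,T)$ by Lemma \ref{luck}, giving $\me\sup_t|\Phi(t)|^p\le Ce^{p\l_n T}\cdot(\dots)$; one absorbs the $p$ into the constant or states the bound with $e^{\l_n T}$ as written.

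For \eqref{wang41} I would exploit the same factorizations. Since $\Phi(s)\Psi(t)=e^{\L s}\wt\Phi(s)\wt\Psi(t)e^{-\L t}=\wt\Phi(s)\wt\Psi(t)e^{-\L(t-s)}$ (using that $\L$ commutes with $\wt\Phi,\wt\Psi$? -- it does not commute with the random factors, so instead keep $\Phi(s)\Psi(t)$ and bound $|\Phi(s)\Psi(t)|\le|e^{\L s}||\wt\Phi(s)||\wt\Psi(t)||e^{-\L t}|\le e^{\l_n s}|\wt\Phi(s)||\wt\Psi(t)|e^{-\l_n t}=e^{-\l_n(t-s)}|\wt\Phi(s)||\wt\Psi(t)|$). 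For $s\le t$ we have $e^{-\l_n(t-s)}\le 1$, so $\me|\sup_{s\le t\le T}\Phi(s)\Psi(t)|^p\le \me(\sup_s|\wt\Phi(s)|^p\sup_t|\wt\Psi(t)|^p)\le (\me\sup|\wt\Phi|^{2p})^{1/2}(\me\sup|\wt\Psi|^{2p})^{1/2}\le C$ by Cauchy--Schwarz and \eqref{wang66}-type bounds for $\wt\Phi,\wt\Psi$. For the second bound in \eqref{wang41}, $|\Phi(t)\Psi(s)|\le e^{\l_n t}e^{-\l_n s}|\wt\Phi(t)||\wt\Psi(s)|=e^{\l_n(t-s)}|\wt\Phi(t)||\wt\Psi(s)|$, and taking expectation with Cauchy--Schwarz gives $\me|\Phi(t)\Psi(s)|^p\le e^{p\l_n(t-s)}C\le Ce^{\l_n|t-s|}$ after absorbing $p$ into the constant (or, if one wants exactly $|t-s|$ rather than $t-s$, note $s,t$ range over $[0,T]$ and this identity is used in Lemma \ref{beta} only for the relevant ordering).

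The main obstacle is \eqref{wang45}, the Hölder-in-time estimate for $\Phi$. The plan is to write, for $s\le t$,
\begin{equation*}
\Phi(t)-\Phi(s)=\int_s^t\big(\L+\pr_yf_n(u)+(\pr_yf_n(u))^2\big)\Phi(u)\,du+\int_s^t\pr_zf_n(u)\Phi(u)\,dW(u),
\end{equation*}
multiply on the right by $\Psi(T)$, and estimate the two terms separately. For the drift term, $|\L+\pr_yf_n(u)+(\pr_yf_n(u))^2|\le\l_n+L+L^2$, so by Hölder in $u$ and then Cauchy--Schwarz in $\o$,
\begin{equation*}
\me\Big|\int_s^t(\cdots)\Phi(u)\Psi(T)\,du\Big|^p\le (\l_n+L+L^2)^p|t-s|^{p-1}\int_s^t\me|\Phi(u)\Psi(T)|^p\,du,
\end{equation*}
and here the key point is to bound $\me|\Phi(u)\Psi(T)|^p$: by the factorization argument above, $|\Phi(u)\Psi(T)|\le e^{\l_n(u-T)}|\wt\Phi(u)||\wt\Psi(T)|\le|\wt\Phi(u)||\wt\Psi(T)|$ since $u\le T$, so $\me|\Phi(u)\Psi(T)|^p\le C(p,L,T)$. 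Combined with $(\l_n+L+L^2)^p\le C(\l_n^p+1)$ this gives a contribution $\le C|t-s|^{p-1}|t-s|(\l_n^p+1)=C|t-s|^p(\l_n^p+1)$, which fits inside the claimed bound $Ce^{\l_n|t-s|}|t-s|^{p/2}(\l_n^p|t-s|^{p/2}+1)$ (note $|t-s|^p(\l_n^p+1)\le |t-s|^{p/2}(\l_n^p|t-s|^{p/2}+|t-s|^{p/2})\le|t-s|^{p/2}(\l_n^p|t-s|^{p/2}+T^{p/2})$). For the stochastic term, Burkholder--Davis--Gundy converts it to $\me\big(\int_s^t|\pr_zf_n(u)\Phi(u)\Psi(T)|^2du\big)^{p/2}\le L^p|t-s|^{p/2-1}\int_s^t\me|\Phi(u)\Psi(T)|^p du\le C|t-s|^{p/2}$ by the same $\me|\Phi(u)\Psi(T)|^p\le C$ bound -- again comfortably inside the target. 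The only genuinely delicate bookkeeping is making sure the cross-term $|e^{\l_n(u-T)}|\le 1$ trick is applied legitimately (it needs $u\le T$, which holds) and that one does not accidentally need $\me|\Phi\Psi|^p$ with the arguments in the order that produces genuine $e^{\l_n}$ growth; in \eqref{wang45} the lucky ordering $\Phi(u)\Psi(T)$ with $u\le T$ keeps everything bounded, and the factor $e^{\l_n|t-s|}$ in the statement is generous slack that one can simply carry along (bounding $1\le e^{\l_n|t-s|}$) rather than needing to produce sharply. Finally one should note the case $t\le s$ is symmetric.
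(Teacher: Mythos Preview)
Your factorization idea $\wt\Psi(t)=\Psi(t)e^{\L t}$, $\wt\Phi(t)=e^{-\L t}\Phi(t)$ does not work as stated, and this breaks every subsequent step. When you compute $d\wt\Psi$, the $\L$ in the drift indeed cancels, but the remaining coefficients become the \emph{conjugates} $e^{-\L t}\pr_yf_n(t)\,e^{\L t}$ and $e^{-\L t}\pr_zf_n(t)\,e^{\L t}$. Since $\L$ is diagonal but $\pr_yf_n,\pr_zf_n$ are not, the $(i,j)$ entry of the conjugate picks up a factor $e^{(\l_j-\l_i)t}$, which can be of order $e^{\l_n t}$. So $\wt\Psi$ does \emph{not} satisfy an SDE with $L$-bounded coefficients, and Lemma~\ref{luck} does not give a $\l_n$-free bound. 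The same issue contaminates the (\ref{wang41}) argument in a second way: you use $|e^{-\L t}|\le e^{-\l_n t}$, but the operator norm of $e^{-\L t}$ is $e^{-\l_1 t}$, not $e^{-\l_n t}$, so the inequality $|\Phi(s)\Psi(t)|\le e^{-\l_n(t-s)}|\wt\Phi(s)||\wt\Psi(t)|$ is false.

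The paper does not factor out $e^{\pm\L t}$ at all. Instead it fixes $x_0\in\R^n$, sets $x(t)=\Psi^*(t)x_0$, applies It\^o to $|x(t)|^p$, and uses the \emph{sign} of $\L$: the term $p|x|^{p-2}\lan x,-\L x\ran$ is $\le 0$ and is simply dropped, leaving only $L$-bounded terms. For (\ref{wang41}) it uses the key identity $\Psi(s)\Phi(s)=I_n$: setting $x_s(t)=\Phi(t)\Psi(s)x_0$ one has $x_s(s)=x_0$ and $x_s$ solves the $\Phi$-equation from time $s$, so It\^o on $|x_s|^p$ plus $\lan x,\L x\ran\le\l_n|x|^2$ and Gronwall give $\me|x_s(t)|^p\le Ce^{\l_n(t-s)}|x_0|^p$. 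This one-sided use of $\L\ge 0$ is the missing idea.

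For (\ref{wang45}) your drift estimate is fine once (\ref{wang41}) is in place, but your treatment of the stochastic term has an adaptedness gap: $\Psi(T)$ is not $\mf_u$-measurable for $u<T$, so you cannot pull it inside the integral before applying BDG. The paper inserts $\Psi(s)\Phi(s)=I_n$ to write $\Phi(\tau)\Psi(T)=\big(\Phi(\tau)\Psi(s)\big)\big(\Phi(s)\Psi(T)\big)$, uses H\"older to split off the $\Phi(s)\Psi(T)$ factor (bounded via the first half of (\ref{wang41})), and then applies BDG to the now-adapted integrand $\pr_zf_n(\tau)\Phi(\tau)\Psi(s)$, bounded via the second half of (\ref{wang41}); this is exactly where the $e^{\l_n|t-s|}$ in (\ref{wang45}) is produced, not merely carried as slack.
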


\begin{proof}
First of all,
for any $x_0\in \R^n$, set $x(\cdot)=\Psi^*(\cdot)x_0$. Then $x(\cdot)$ solves the following SDE:
\begin{equation*}\label{wang4}
\left\{
\begin{split}
dx(t)&=\big(-\L-\partial_yf_n(t)\big)^*x(t)dt-\partial_zf_n^*(t)x(t)dW(t),\q   t\in [0,T), \\
 \displaystyle
 x(0)&=x_0.
\end{split}
\right.
\end{equation*}
By It\^o's formula, one can have
\begin{equation*}\label{bai11}
\begin{aligned}
&|x(t)|^p-|x(0)|^p\\
=&\int_0^t p|x|^{p-2}\lan x, (-\L-\partial_yf_n^*x\ran ds+\int_0^t p|x|^{p-2}\lan x,-\partial_zf_n^*x  \ran dW(s)\\
 &+\frac{1}{2}\int_0^t x^*\partial_zf_n\partial_{xx}|x|^p\partial_zf_n^*x ds\\
 \leq&\int_0^t p|x|^{p-2}\lan x, -\partial_yf_n(t)^*x\ran ds+\int_0^t p|x|^{p-2}\lan x,-\partial_zf_n^*x  \ran dW(s)\\
 &+\frac{1}{2}\int_0^t x^*\partial_zf_n\partial_{xx}|x|^p\partial_zf_n^*x ds\\
 \leq&C\int_0^t|x|^pds-p\int_0^t|x|^{p-2}\lan x,-\partial_zf_n^*x  \ran dW(s).\\
\end{aligned}
\end{equation*}
Then, 
$$\me\sup_{0\leq t\leq T}|x(t)|^p\leq C|x_0|^p;$$
therefore
$$\me\sup_{0\leq t\leq T}|\Psi(t)|^p=\sup_{x_0\in\R^n}\frac{\me\sup_{0\leq t\leq T}|x(t)|^p}{|x_0|^p}\leq C$$
here $C$ depends only on $p,\,L$ and $T$. Similarly, one can prove $\ds \me\sup_{0\leq t\leq T}|\Phi(t)|^p\leq Ce^{\l_n T}$, and then \eqref{wang66} is proved.\\

Next, we only prove the second inequality of \eqref{wang41}. The first one can be proved with the similar procedure.
For any $x_0\in \R^n$, set $x_s(t)=\Phi(t)\Psi(s)x_0$. Then $x_s(t)$ solves the following stochastic differential equation:
\begin{equation*}\label{wang42}
\left\{
\begin{split}
dx_s(t)&=\big(\L+\partial_yf_n+(\partial_yf_n)^2\big)x_s(t)dt+\partial_zf_nx_s(t)dW(t),\q   t\in [s,T), \\
 \displaystyle
 x_s(s)&=x_0.
\end{split}
\right.
\end{equation*}
By It\^o's formula, one can have
\begin{equation*}\label{bai41}
\begin{aligned}
&|x_s(t)|^p-|x_s(s)|^p\\
=&\int_s^t p|x_s(\tau)|^{p-2}\lan x_s(\tau), (\L\partial_yf_n(\tau)+(\partial_yf_n(\tau))^2)x_s(\tau)\ran d\tau\\
  &\q+\int_s^t p|x_s(\tau)|^{p-2}\lan x_s(\tau),\partial_zf_n x_s(\tau)  \ran dW(\tau)\\
  &\q
 +\frac{1}{2}\int_s^t x_s(\tau)^*\partial_zf_n^*\partial_{xx}|x_s(\tau)|^p\partial_zf_n x_s(\tau) d\tau\\
 \leq&C(\l_n+1)\int_s^t|x_s(\tau)|^pd\tau+C\int_s^t|x_s(\tau)|^{p-2}\lan x_s(\tau),\partial_zf_n x_s(\tau)  \ran dW(\tau).\\
\end{aligned}
\end{equation*}
Then, by Gronwall's inequality, 
$$\me|\Phi(t)\Psi(s)x_0|^p=\me|x_s(t)|^p\leq Ce^{\l_n(t-s)}|x_0|^p,$$
where $C$ depends only on $p,\,L$.\\

Finally, by Eq. \eqref{wang2}, one has
\begin{equation}\label{bai12}
\begin{aligned}
&\me|(\Phi(t)-\Phi(s))\Psi(T)|^p\\
=&\me\Big|\int_s^t(\L+\partial_yf_n(\tau)+(\partial_yf_n(\tau))^2)\Phi(\tau) d\tau\Psi(T)+\int_s^t\partial_zf_n(\tau)\Phi(\tau) dW(\tau)\Psi(T)\Big|^p\\
\leq&C\l_n^p\me\Big(\int_s^t\big|\Phi(\tau)\Psi(T)\big|d\tau\Big)^p+C\me\Big(\int_s^t\big|\Phi(\tau)\Psi(T)\big|d\tau\Big)^p\\
&\q+C\me\Big|\int_s^t\partial_zf_n(\tau)\Phi(\tau) dW(\tau)\Psi(T)\Big|^p\\
\leq&C(\l_n^p+1)\me\Big(\int_s^t\big|\Phi(\tau)\Psi(T)\big|d\tau\Big)^p+C\me\Big|\int_s^t\partial_zf_n(\tau)\Phi(\tau) dW(\tau)\Psi(T)\Big|^p\\
:=&C(\l_n^p+1)J_1+CJ_2.
\end{aligned}
\end{equation}

For $J_1$, by \eqref{wang41}, we have
\begin{equation}\label{bai43}
\begin{aligned}
J_1\leq& \me\int_s^t|\Phi(\tau)\Psi(T)|^pd\tau\Big(\int_s^t1d\tau\Big)^{\frac{p}{q}}
=\int_s^t\me|\Phi(\tau)\Psi(T)|^pd\tau(t-s)^{\frac{p}{q}}\\
\leq& C(t-s)^{1+\frac{p}{q}}
=C(t-s)^p,
\end{aligned}
\end{equation}
where $C$ depends only on $p,\,L$ and $T$.

For $J_2$, by \eqref{wang41}, H\"older's inequality and Burkholder-Davis-Gundy inequality, we also can obtain,
\begin{equation}\label{bai44}
\begin{aligned}
J_2=&\me\Big|\int_s^t\partial_zf_n(\tau)\Phi(\tau) dW(\tau)\Psi(T)\Big|^p=\me\Big|\int_s^t\partial_zf_n(\tau)\Phi(\tau)\Psi(s) dW(\tau)\Phi(s)\Psi(T)\Big|^p\\
\leq & \Big(\me\Big|\int_s^t\partial_zf_n(\tau)\Phi(\tau)\Psi(s)dW(\tau)\Big|^{2p}\Big)^{1/2}\big(\me|\Phi(s)\Psi(T)|^{2p}\big)^{1/2}\\
\leq &C\Big(\me\Big(\int_s^t|\Phi(\tau)\Psi(s)|^2dW(\tau)\Big)^{p}\Big)^{1/2}\big(\me|\Phi(s)\Psi(T)|^{2p}\big)^{1/2}\\
\leq& C\Big\{\me\Big[\Big(\int_s^t1d\tau\Big)^{\frac{1}{q}}\Big(\int_s^t|\Phi(\tau)\Psi(s)|^{2p}d\tau\Big)^{\frac{1}{p}}\big]^p\Big\}^{1/2}\big(\me|\Phi(s)\Psi(T)|^{2p}\big)^{1/2}\\
\leq& C\Big\{(t-s)^{\frac{p}{q}}\int_s^t\me|\Phi(\tau)\Psi(s)|^{2p}d\tau\Big\}^{1/2}\big(\me|\Phi(s)\Psi(T)|^{2p}\big)^{1/2}\\
\leq & C\Big\{(t-s)^{\frac{p}{q}}\Big[e^{\l_n(t-s)}(t-s)\Big]\Big\}^{1/2}\\
\leq & C e^{\l_n(t-s)}(t-s)^{\frac{p}{2}},
\end{aligned}
\end{equation}
where $C$ depends only on $p,\,L$.
Combining \eqref{bai12}-\eqref{bai44}, we have \eqref{wang45}.
\end{proof}

We are now in a position to prove Lemma
\ref{beta}.

\begin{proof}[\bf {Proof of Lemma~\ref{beta}}]

Let $0\leq s\leq t\leq T$.
Since $\beta_n(\cdot)$ has the representation $\beta_n(t)=D_{t}\alpha_n(t),\, \as,\, \ae t\in [0,T],$ we have
\begin{equation}\label{beta1}
\begin{aligned}
\beta_n(t)-\beta_n(s)=(D_t\alpha_n(t)-D_s\alpha_n(t))+(D_s\alpha_n(t)-D_s\alpha_n(s)).
\end{aligned}
\end{equation}
By Eq. \eqref{M1}, for any $\theta,\,\theta'\in[0,t] $, by It\^o's formula, we can obtain that
\begin{equation*}\label{beta2}
\begin{aligned}
&\me|D_{\theta}\a_n(t)-D_{\theta'}\a_n(t)|^2+\me\int_t^T|D_{\theta}\b_n(s)-D_{\theta'}\b_n(s)|^2ds\\
=&\me|D_{\theta}\a_n(T)-D_{\theta'}\a_n(T)|^2-2\int_t^T\big(\lan D_{\theta}\a_n(s)-D_{\theta'}\a_n(s), \L (D_{\theta}\a_n(s)-D_{\theta'}\a_n(s))\ran\\
 &+\lan D_{\theta}\a_n(s)-D_{\theta'}\a_n(s), \partial_yf_n(s)(D_{\theta}\a_n(s)-D_{\theta'}\a_n(s))
+\partial_zf_n(s)(D_{\theta}\b_n(s)-D_{\theta'}\b_n(s))\big)ds\\
\leq &\me|D_{\theta}\a_n(T)-D_{\theta'}\a_n(T)|^2+C\int_t^T|D_{\theta}\a_n(s)-D_{\theta'}\a_n(s)|^2ds\\
&+\frac{1}{2}\int_t^T|D_{\theta}\b_n(s)-D_{\theta'}\b_n(s)|^2ds,\\
\end{aligned}
\end{equation*}
where $C$ is independent of $n$. Here, we apply the fact that $\L$ is positive and $\partial_y f_n(\cdot),\,\partial_z f_n(\cdot)$ are bounded. Therefore, setting $\theta=t,\,\theta'=s$, by Gronwall's inequality and $\bf{(A2)}$ , we deduce that
\begin{equation}\label{beta3}
\begin{aligned}
\me|D_{t}\a_n(t)-D_{s}\a_n(t)|^2
\leq C\me|D_{t}\a_n(T)-D_{s}\a_n(T)|^2
\leq C|t-s|,
\end{aligned}
\end{equation}
where $C$ is independent of $n$.\\

For  $\{\Psi(t)\}_{0\leq t\leq T}$ and $\{\Phi(t)\}_{0\leq t\leq T}$  introduced in  \eqref{wang1} and \eqref{wang2}, from \cite[Theorem 6.14]{Yong-Zhou99}, $\Psi(t)\Phi(t)=I_n$, $\ae\,t\in[0,T]$.
By virtue of $\Psi(\cdot)$ and $\Phi(\cdot)$, we can list a representation of $D_\theta\a(\cdot)$. 
By It\^o's formula,
\begin{equation*}\label{bai1}
\begin{aligned}
&d(\Psi(t)D_{\theta}\a(t))\\
=&-\Psi(t)(\L+\partial_yf_n(t))D_{\theta}\a_n(t)dt-\Psi(t)\partial_zf_n(t)D_{\theta}\a_n(t)dW(t)\\
 &+\Psi(t)(\L+\partial_yf_n(t))D_{\theta}\a_n(t)dt+\Psi(t)\partial_zf_n(t)D_{\theta}\b_n(t)dt\\
 &+\Psi(t) D_{\theta}\b_n(t)dW(t)-\Psi(t)\partial_zf_n(t)D_{\theta}\b_n(t)dt\\
=&\Psi(t)\big(D_{\theta}\b_n-\partial_zf_n(t)D_{\theta}\a_n(t)\big)dW(t).
\end{aligned}
\end{equation*}
Therefore, 
\begin{equation*}\label{bai2}
\begin{aligned}
&D_{\theta}\a(t))\\
=&\Phi(t)\Psi(T)D_{\theta}\a_n(T)-\Phi(t)\int_t^T \Psi(s)(D_{\theta}\b_n(s)-\partial_zf_n(s)D_{\theta}\a_n(s))dW(s)\\
=&\me(\Phi(t)\Psi(T)D_{\theta}\a_n(T)|\mf_t),
\end{aligned}
\end{equation*}
and
\begin{equation}\label{bai3}
\begin{aligned}
&\me|D_{\theta}\a_n(t)-D_{\theta}\a_n(s)|^2\\
=&\me\big|\me\big(\Phi(t)\Psi(T)D_{\theta}\a_n(T)|\mf_t\big)-\me(\Phi(s)\Psi(T)D_{\theta}\a_n(T)|\mf_s)\big|^2\\
\leq&2\me\big|\me\big((\Phi(t)-\Phi(s))\Psi(T)D_{\theta}\a_n(T)|\mf_t\big)\big|^2\\
&+2\me\big|\me\big(\Phi(s)\Psi(T)D_{\theta}\a_n(T)|\mf_t\big)-\me\big(\Phi(s)\Psi(T)D_{\theta}\a_n(T)|\mf_s\big)\big|^2\\
\leq&2\me\big|(\Phi(t)-\Phi(s))\Psi(T)D_{\theta}\a_n(T)\big|^2\\
&+2\me\big|\me\big(\Phi(s)\Psi(T)D_{\theta}\a_n(T)|\mf_t\big)-\me\big(\Phi(s)\Psi(T)D_{\theta}\a_n(T)|\mf_s\big)\big|^2\\
:=&2(I_1+I_2).
\end{aligned}
\end{equation}
By H\"older's inequality, \textbf{(A2)} and \eqref{wang45},  we obtain
\begin{equation}\label{bai4}
\begin{aligned}
I_1\leq& \big(\me|D_{\theta}\a_n(T)|^q\big)^{2/q}\big(\me(\Phi(t)-\Phi(s))\Psi(T)|^{\frac{2q}{q-2}}\big)^{\frac{q-2}{q}}\\
\leq& Ce^{\l_n |t-s|}|t-s|(\l_n^2|t-s|+1),
\end{aligned}
\end{equation}
where $C$ depends only on $q,\,L,\,T$ and $M$.

We claim that for any $0\leq \theta,\,s\leq T$, 
\begin{equation}\label{m22}
\begin{aligned}
\Phi(s)\Psi(T)D_{\theta}\a_n(T)\in M^{2,2},
\end{aligned}
\end{equation}
i.e., $\Phi(s)\Psi(T)D_{\theta}\a_n(T)$ has a representation:
$$\Phi(s)\Psi(T)D_{\theta}\a_n(T)
=\me(\Psi(T)D_{\theta}\a_n(T))+\int_0^Tu_{\theta,s}(t)dW(t),$$
and $\sup_{\theta,s}\me|u_{\theta,s}(t)|^{2}
\leq C$,
where $C$ depends only on $q$, $L,\,T$ and $M$.

If \eqref{m22} is true, then
\begin{equation*}\label{bai5}
\begin{aligned}
I_2=&\me\big|\me(\Phi(s)\Psi(T)D_{\theta}\a_n(T)|\mf_t)-\me(\Phi(s)\Psi(T)D_{\theta}\a_n(T)|\mf_s)\big|^2\\
=&\me\Big|\int_s^tu_{\theta,s}(\tau)dW(\tau)\Big|^2=\me\int_s^t|u_{\theta,s}(\tau)|^2d\tau\\
\leq & C|t-s|,
\end{aligned}
\end{equation*}
where $C$ depends only on $q,\,L,\,T$ and $M$.
Combining the above inequality and \eqref{beta1}-\eqref{bai4} results in \eqref{beta10}.\\

Now we prove \eqref{m22}. Indeed,
For any $0\leq \theta,\,s\leq t\leq T$,
$D_{\theta}(\Phi(s)\Psi(\cdot))$ satisfies the following SDE:
\begin{equation*}\label{wang12}
\left\{
\begin{split}
dD_{\theta}(\Phi(s)\Psi(t))&=D_{\theta}(\Phi(s)\Psi(t))\big(-\L-\partial_yf_n(t)\big)dt-D_{\theta}(\Phi(s)\Psi(t))\partial_zf_n(t)dW(t)\\
           &\qq-(\Phi(s)\Psi(t))(\partial_{yy}f_n(t)D_{\theta}\a_n(t)+\partial_{yz}f_n(t)D_{\theta}\b_n(t))dt\\
           &\qq-(\Phi(s)\Psi(t))(\partial_{yz}f_n(t)D_{\theta}\a_n(t)+\partial_{zz}f_n(t)D_{\theta}\b_n(t))dW(s),\q   \theta\leq t\leq T, \\
 \displaystyle
 D_{\theta}(\Phi(s)\Psi(\theta))&=0,\\
 D_{\theta}(\Phi(s)\Psi(t))&=0,\q 0\leq t <\theta.
\end{split}
\right.
\end{equation*}
For any $x_0\in \R^n$, set $x_{\theta,s}(\cdot)=D_{\theta}(\Psi^*(\cdot)\Phi^*(s))x_0$ and $y_s(\cdot)=\Psi^*(\cdot)\Phi^*(s)x_0$. Then $x_{\theta,s}(\cdot)$ satisfies the following SDE:
\begin{equation*}\label{wang13}
\left\{
\begin{split}
dx_{\theta,s}(t)&=\big(-\L-\partial_yf_n^*(t)\big)x_{\theta,s}(t)dt-\partial_zf_n^*(t)x_{\theta,s}(t)dW(t)\\
           &\q-(D_{\theta}\a_n^*(t)\partial_{yy}f_n^*(t)+D_{\theta}\b_n^*(t)\partial_{yz}f_n^*(t))y_s(t)dt\\
           &\q-(D_{\theta}\a_n^*(t)\partial_{yz}f_n^*(t)+D_{\theta}\b_n^*(t)\partial_{zz}f_n^*(t))y_s(t)dW(t),\q   \theta\leq t\leq T, \\
 \displaystyle
 x_{\theta,s}(\theta)&=0,\\
 x_{\theta,s}(t)&=0,\q 0\leq t <\theta\leq T.
\end{split}
\right.
\end{equation*}

For any $p\in [2, \,q)$, by Lemma \ref{luck} and \eqref{wang41}, we have
\begin{equation}\label{bai31}
\begin{aligned}
&\me\sup_{\theta\leq t\leq T}|D_{\theta}(\Psi^*(t)\Phi^*(s))x_0|^p\\
\leq & C\bigg[\me\Big(\int_\theta^T|(D_{\theta}\a_n^*(t)\partial_{yy}f_n^*(t)+D_{\theta}\b_n^*(t)\partial_{yz}f_n^*(t))y_s(t)|dt\Big)^p\\
    &\qq+\me\Big(\int_\theta^T|(D_{\theta}\a_n^*(t)\partial_{yz}f_n^*(t)+D_{\theta}\b_n^*(t)\partial_{zz}f_n^*(t))y_s(t)|^2dt\Big)^{\frac{p}{2}}\bigg]\\
\leq& C  \bigg(\me\sup_{\theta\leq t\leq T}|D_\theta\a_n(t)|^q+\me\Big(\int_\theta^T|D_\theta\b_n(t)|^2dt\Big)^{\frac{q}{2}}\bigg)^{\frac{p}{q}}\big(\me\sup_{s\leq t\leq T}|y_s(t)|^{\frac{pq}{q-p}}\big)^{\frac{q-p}{q}}\\
\leq& C \Big(\sup_\theta\me|D_{\theta}\alpha_n(T)|^q\Big)^{\frac{p}{q}}|x_0|^p\\
\leq & C|x_0|^p,
\end{aligned}
\end{equation}
where $C$ depends only on $q$, $L,\,T$ and $M$. Therefore, by the Clark-Ocone-Haussman formula,
\begin{equation*}\label{bai32}
\begin{aligned}
&\Phi(s)\Psi(T)D_{\theta}\a_n(T)\\
=&\me(\Phi(s)\Psi(T)D_{\theta}\a_n(T))+\int_0^T\me\Big(D_t\big(\Phi(s)\Psi(T)D_{\theta}\a_n(T)\big)\Big|\mf_t\Big)dW(t)\\
=&\me(\Phi(s)\Psi(T)D_{\theta}\a_n(T))\\
 &\q+\int_0^T\me\Big(D_t\big(\Phi(s)\Psi(T)\big)D_{\theta}\a_n(T)+\Phi(s)\Psi(T)D_tD_{\theta}\a_n(T)\Big|\mf_t\Big)dW(t)\\
:=&\me(\Psi(T)D_{\theta}\a_n(T))+\int_0^Tu_{\theta,s}(t)dW(t).
\end{aligned}
\end{equation*}
Fixing $p_0$ with $2<p_0< q/2$, by \textbf{(A2)}, \eqref{wang41} and \eqref{bai31},  we have
\begin{equation*}\label{bai32}
\begin{aligned}
\me|u_{\theta,s}(t)|^{p_0}
=&\me\Big|\me\big(D_t(\Phi(s)\Psi(T))D_{\theta}\a_n(T)+\Phi(s)\Psi(T)D_tD_{\theta}\a_n(T)\big|\mf_t\big)\Big|^{p_0}\\
\leq&C \big(\me|D_t(\Phi(s)\Psi(T))D_{\theta}\a_n(T)|^{p_0}+\me|\Phi(s)\Psi(T)D_tD_{\theta}\a_n(T)|^{p_0}\big)\\
\leq& C\Big(\me|D_t(\Phi(s)\Psi(T))|^{\frac{p_0q}{q-p_0}}\Big)^{\frac{q-p_0}{q}}\Big(\me|D_\theta\a_n(T)|^{q}\Big)^{\frac{p_0}{q}}\\
&\q+C\Big(\me|\Phi(s)\Psi(T)|^{\frac{p_0q}{q-p_0}}\Big)^{\frac{q-p_0}{q}}\Big(\me|D_tD_\theta\a_n(T)|^{q}\Big)^{\frac{p_0}{q}}\\
\leq &C,
\end{aligned}
\end{equation*}
where $C$ depends only on $q$, $L,\,T$ and $M$, which proves $\Phi(s)\Psi(T)D_{\theta}\a_n(T)\in M^{2,p_0}\subset M^{2,2}.$

\end{proof}

Now, we present the backward Euler method for Eq. \eqref{bsden}.
 Suppose a partition $\pi:0=t_0<t_1<\cdots<t_N=T$ of $[0,T]$ with the mesh
 size $\displaystyle|\pi|=\max_{0\leq i\leq N} |t_{i+1}-t_i|$. Then we denote $\D_i=t_{i+1}-t_i$ and $\D_i
 W=W(t_{i+1})-W(t_i)$, for $i = 0, 1, \cdots, N-1$.

For simplicity, we assume that $\D_i=|\pi|=\frac{T}{N}$, for each $i=0,1,\cdots,N-1$. Our numerical scheme
still works for general uniform partition of $[0,T]$ (i.e., there exists a constant $K$, such that $K|\pi|\leq \D_j$, for any $j=0,1,\cdots,N-1$).

Throughout this paper, we assume that $|\pi|\leq 1.$

%
%

 For the partition $\pi$, we introduce the implicit backward Euler method for Eq. \eqref{bsden} as
\begin{equation}\label{nm1}
\begin{aligned}
\alpha_n^\pi(t_{j+1})-\alpha_n^\pi(t)
=&\bigg(\L \alpha_n^\pi(t_j)+f_n\Big(t_j,\alpha_n^\pi(t_j),\frac{1}{\Delta_j}
\me\Big(\int_{t_j}^{t_{j+1}}\beta_n^\pi(s)ds\big|\mf_{t_j}\Big)\Big)\bigg)(t_{j+1}-t)\\
&+\int_{t}^{t_{j+1}}\beta_n^\pi(s)dW(s)\\
 \end{aligned}
 \end{equation}
for any  $j=0,1,\cdots,N-1$ and $\a_{n}^{\pi}(t_N)={\lan q_T^\pi,\phi_i\ran}_{L^2(D)}$,
where $q_T^\pi$ is an approximation of $q_T$.

\begin{remark}
Multiplying both sides of \eqref{nm1} by $\D_jW$, and then taking expectation, we have
\begin{equation*}
\begin{aligned}
\me\int_{t_j}^{t_{j+1}}\beta^\pi_n(s)ds=\me(\alpha^\pi_n(t_{j+1})\,\D_jW).
\end{aligned}
\end{equation*}
Furthermore, 
\begin{equation*}
\begin{aligned}
\alpha^\pi_n(t_{j})=\L_j^{-1}\Big[\me(\alpha^\pi_n(t_{j+1})|\mf_{t_j})-f_n\big(t_j,\alpha^\pi_n(t_{j}),\frac{1}{\D_j}\me(\alpha^\pi_n(t_{j+1})\,\D_jW)\big)\Big],
\end{aligned}
\end{equation*}
where
$$\L_j=\begin{pmatrix}
1+\l_1\D_j& 0  &\cdots &0\\
0   & 1+\l_2\D_j&\cdots&0\\
\vdots&\vdots&\ddots&\vdots\\
0&  0     &\cdots &1+\l_n\D_j\\
\end{pmatrix}.$$
Therefore, this scheme involves the computation of conditional expectations with respect to $\mf_{t_j}$.  In this respect, the Monte-Carlo method is a popular choice. We refer the reader to the related work (\cite{Bender-Denk07,Bouchard-Touzi04,Gobet-Lemor-Warin}) for details.
\end{remark}

The following lemma comes from \cite[Lemma 5.4]{Zhang04}.

\begin{lemma} \label{Gronwall's ineq} %
Suppose that  $a_i\geq 0,b_i\geq 0,c>0, a_{i}\leq(1+c\D_{i})a_{i+1}+b_{i+1}$, for any $i=0,1,\cdots,N-1$.Then
$$\max_{0\leq i\leq n}a_i\leq e^{cT}(a_n+\sum_{i=1}^n b_i).$$

\end{lemma}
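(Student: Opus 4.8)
The plan is to unroll the one–step recursion backward in the index and then bound the resulting telescoping products. Fix $n$ with $n\le N$ and an index $i$, $0\le i\le n$. I would first record, as an auxiliary claim proved by downward induction on $i$, that iterating the hypothesis $a_i\le(1+c\Delta_i)a_{i+1}+b_{i+1}$ from $i$ up to $n-1$ gives
\begin{equation*}
a_i\le\Big(\prod_{j=i}^{n-1}(1+c\Delta_j)\Big)a_n+\sum_{k=i+1}^{n}\Big(\prod_{j=i}^{k-2}(1+c\Delta_j)\Big)b_k,
\end{equation*}
with the convention that an empty product equals $1$; here the inductive step is exactly one application of the hypothesis together with $1+c\Delta_i\ge 1$ and $b_{i+1}\ge 0$.

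Next I would estimate every product above. Since $1+x\le e^{x}$ for $x\ge 0$ and $\sum_{j=0}^{N-1}\Delta_j=t_N-t_0=T$, for any indices $0\le i\le m\le n-1$ we have
\begin{equation*}
\prod_{j=i}^{m}(1+c\Delta_j)\le\exp\Big(c\sum_{j=i}^{m}\Delta_j\Big)\le e^{cT}.
\end{equation*}
In particular $\prod_{j=i}^{n-1}(1+c\Delta_j)\le e^{cT}$, and every coefficient $\prod_{j=i}^{k-2}(1+c\Delta_j)$ appearing in the sum above is $\le e^{cT}$ (including the empty–product case $k=i+1$, where the coefficient is $1\le e^{cT}$).

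Plugging these bounds into the unrolled recursion yields, for every $0\le i\le n$,
\begin{equation*}
a_i\le e^{cT}\Big(a_n+\sum_{k=i+1}^{n}b_k\Big)\le e^{cT}\Big(a_n+\sum_{k=1}^{n}b_k\Big),
\end{equation*}
where the last step uses $b_k\ge 0$; the inequality also holds trivially for $i=n$ since $e^{cT}\ge 1$. Taking the maximum over $0\le i\le n$ gives the claim. I do not expect a real obstacle here: the only delicate point is the bookkeeping of the telescoping products and the empty–product convention in the unrolled recursion, which is why I would isolate that identity as an internal claim and prove it by induction rather than merely "iterating" the hypothesis informally.
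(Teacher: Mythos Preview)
Your argument is correct: unrolling the one-step recursion and bounding each telescoping product by $e^{cT}$ via $1+x\le e^x$ and $\sum_j\Delta_j\le T$ is exactly the standard proof of this discrete backward Gronwall inequality. The paper does not supply its own proof of this lemma but simply cites \cite[Lemma~5.4]{Zhang04}, so there is nothing to compare against; your write-up would serve perfectly well as a self-contained proof here.
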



The following result is the convergence speed for the backward Euler method indicated in \eqref{nm1}.

\begin{theorem}\label{I_j}
Let $\bf{(A1)}$ and $\bf{(A2)}$ hold, and suppose that $\l_n^2|\pi|\leq 1$ is true. Then
\begin{equation}\label{nmm2}
\begin{aligned}
&\me \max_{0\leq j\leq
N}|\a_{n}(t_j)-\a_{n}^{\pi}(t_j)|^2+\me\int_0^T|\b_{n}(t)-\b_{n}^\pi(t)|^2dt\\
\leq&
C\l_n^2\Big(\me |\a_{n}^{\pi}(T)-\a_{n}(T)|^2+|\pi|\Big),
\end{aligned}
\end{equation}
where  $C$ is a constant depending only on $\l_1,\,q,\,T,\,L$ and $M$.
\end{theorem}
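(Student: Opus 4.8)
The plan is to run a backward-Euler error analysis for the finite-dimensional BSDE \eqref{bsden}, comparing its solution $(\a_n,\b_n)$ with the numerical solution $(\a_n^\pi,\b_n^\pi)$. First I would set up the local error on a single subinterval $[t_j,t_{j+1}]$: from \eqref{bsden} one has $\a_n(t_j) = \me(\a_n(t_{j+1})\mid\mf_{t_j}) - \me\big(\int_{t_j}^{t_{j+1}}(\L\a_n(s)+f_n(s,\a_n(s),\b_n(s)))\,ds\mid\mf_{t_j}\big)$, and similarly $\me\big(\int_{t_j}^{t_{j+1}}\b_n(s)\,ds\mid\mf_{t_j}\big)$ can be read off by multiplying by $\D_jW$. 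Subtracting the scheme \eqref{nm1}, I would introduce the error processes $e_j^\a=\a_n(t_j)-\a_n^\pi(t_j)$ and $e^\b(t)=\b_n(t)-\b_n^\pi(t)$, and estimate the one-step discrepancy by splitting the drift difference into (i) the time-regularity gap $\int_{t_j}^{t_{j+1}}\big(\L\a_n(s)+f_n(s,\a_n(s),\b_n(s))-\L\a_n(t_j)-f_n(t_j,\a_n(t_j),\b_n(t_j))\big)\,ds$, which is controlled by Lemma~\ref{alpha}, Lemma~\ref{beta} and the $\tfrac12$-H\"older continuity of $f$ in $t$ from \textbf{(A1)}, and (ii) the Lipschitz-in-$(y,z)$ part, handled by the uniform bound $L$ on $\partial_y f_n,\partial_z f_n$. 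The factor $\L$ contributes the $\l_n$-dependence, so the local truncation error on $[t_j,t_{j+1}]$ will be of size $C\l_n^2(1+\l_n^2|\pi|)|\pi|^2\le C\l_n^2|\pi|^2$ after using the hypothesis $\l_n^2|\pi|\le 1$.

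Next I would turn the one-step estimate into a global one. Applying It\^o's formula to $|e^\a(t)|^2$ on each $[t_j,t_{j+1}]$ — as in the proof of Theorem~\ref{convergence speed} — and taking conditional expectation, I get a recursion of the form $\me|e_j^\a|^2 + \me\int_{t_j}^{t_{j+1}}|e^\b(s)|^2\,ds \le (1+C\D_j)\me|e_{j+1}^\a|^2 + (\text{local error}_j)$, where the local error term also needs a careful treatment of the cross-term $\me\big(\int_{t_j}^{t_{j+1}}\b_n(s)dW(s)\big)\cdot(\text{drift difference})$ and of the fact that the scheme uses the averaged value $\frac1{\D_j}\me(\int_{t_j}^{t_{j+1}}\b_n^\pi(s)ds\mid\mf_{t_j})$ rather than $\b_n^\pi(t_j)$ pointwise; the $L^2$-projection-type inequality $\me|\frac1{\D_j}\me(\int_{t_j}^{t_{j+1}}\b_n(s)ds\mid\mf_{t_j})|^2\le \frac1{\D_j}\me\int_{t_j}^{t_{j+1}}|\b_n(s)|^2ds$ plus Lemma~\ref{beta} absorbs the discrepancy. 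Summing over $j$ and invoking the discrete Gronwall inequality Lemma~\ref{Gronwall's ineq} (with $c$ independent of $n$ once the $\l_n$-factors are pulled into the $b_i$'s) yields $\me\max_j|e_j^\a|^2 + \me\int_0^T|e^\b(t)|^2dt \le e^{CT}\big(\me|\a_n(T)-\a_n^\pi(T)|^2 + \sum_j C\l_n^2|\pi|^2\big) = C\l_n^2\big(\me|\a_n(T)-\a_n^\pi(T)|^2 + |\pi|\big)$, which is \eqref{nmm2}. To pass from the pointwise-in-$j$ bound to $\me\max_j$ I would additionally apply the Burkholder--Davis--Gundy inequality to the martingale part, exactly as in Step 2 of the proof of Theorem~\ref{convergence speed}, and move the $\tfrac12\,\me\max_j|e_j^\a|^2$ term to the left.

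The main obstacle I expect is bookkeeping the $\l_n$-dependence cleanly: the matrix $\L$ has norm $\l_n$, so naively each appearance of $\L$ in the drift costs a factor $\l_n$, and the implicit scheme's inversion of $\L_j=I+\D_j\L$ must be used to show that these factors do not accumulate into $e^{\l_n T}$-type blowups but only into a benign polynomial $\l_n^2$ — this is precisely where the standing assumption $\l_n^2|\pi|\le 1$ is essential and must be invoked at each step. A secondary technical point is that Lemma~\ref{beta}'s modulus of continuity for $\b_n$ carries the factor $e^{\l_n|t-s|}(\l_n^2|t-s|+1)$; since $|t-s|\le|\pi|$ and $\l_n^2|\pi|\le1$ one has $\l_n|\pi|\le\l_n^{-1}\le\l_1^{-1}$ bounded, so $e^{\l_n|\pi|}\le e^{1/\l_1}$ is a harmless constant, but this reduction has to be stated explicitly so the final constant depends only on $\l_1,q,T,L,M$ as claimed.
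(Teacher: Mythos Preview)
Your plan is sound and hits the same milestones as the paper: a one-step error identity on $[t_j,t_{j+1}]$, control of the truncation by Lemmas~\ref{alpha}--\ref{beta} together with the Lipschitz/H\"older bounds in \textbf{(A1)}, a discrete Gronwall step (Lemma~\ref{Gronwall's ineq}), and a separate upgrade from $\max_j\me$ to $\me\max_j$. Your closing observation that $\l_n|\pi|\le\l_n^{-1}\le\l_1^{-1}$ makes $e^{\l_n|\pi|}$ a harmless constant is exactly how the paper opens Step~1.

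The execution differs in two places. First, for the recursion the paper does \emph{not} apply It\^o's formula to $|e^\a(t)|^2$; instead it writes the algebraic identity
\[
\L_j\,e_j^\a+\int_{t_j}^{t_{j+1}}e^\b\,dW
=e_{j+1}^\a+\int_{t_j}^{t_{j+1}}\L(\a_n(t_j)-\a_n(t))\,dt+(\text{$f_n$-truncation}),
\]
squares, and takes expectation. This puts $\me|\L_j e_j^\a|^2\ge(1+\l_1\D_j)\me|e_j^\a|^2$ directly on the left, and the Young parameter is chosen as $\e=(\l_1-7L^2|\pi|)/L^2$ so that the coefficient of $\me\int|e^\b|^2$ on the right stays strictly below $1$ while the coefficient of $\me|e_{j+1}^\a|^2$ is $1+C\D_j$ with $C=C(\l_1,L)$ independent of $\l_n$. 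This is precisely the mechanism by which the implicit step absorbs the $\L$-contribution that you flagged as the main obstacle; your It\^o route can be pushed through (the leading term $-2\D_j\langle e_j^\a,\L e_j^\a\rangle$ has the right sign), but the cross-term $\langle e^\a(t)-e_j^\a,\L e_j^\a\rangle$ then needs additional work, whereas the squared identity avoids it entirely.

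Second, for the $\me\max_j$ upgrade the paper does not use BDG. It iterates the pointwise bound $|e_j^\a|\le(1-L|\pi|)^{-1}\me\big(|e_{j+1}^\a|+b_j\,\big|\,\mf_{t_j}\big)$ down to $j=N$, obtaining $|e_j^\a|\le C\,\me\big(|e_N^\a|+\sum_k b_k\,\big|\,\mf_{t_j}\big)$, and then applies Doob's $L^2$ maximal inequality to the conditional-expectation martingale to get $\me\max_j|e_j^\a|^2\le C\big(\me|e_N^\a|^2+\me(\sum_k b_k)^2\big)$; the accumulated $\me(\sum_k b_k)^2$ is closed using the $\me\int_0^T|e^\b|^2\,dt$ bound already obtained in Step~1. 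Your BDG approach on the continuous interpolant would also work (since $\max_j|e_j^\a|^2\le\sup_t|e^\a(t)|^2$), but the paper's route never leaves the mesh points and sidesteps any estimate of $e^\a$ between nodes.
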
%

\begin{proof}
We divide the proof into two steps.

\textbf{Step 1.} 
Under the condition $\l_n^2|\pi|\leq 1$, by Lemma \ref{beta}, we have, for any $t,s \in [0,T]$,
\begin{equation}\label{wang61}
\begin{aligned}
\me|\beta_{n}(t)-\beta_{n}(s)|^2\leq C|t-s|(\l_n^2|t-s|+1),
\end{aligned}
\end{equation}
where $C$ is a constant depending only on $q,\, T$, $L$ and $M$.

 For any
$j=0,1,\cdots,N-1$, taking $t=t_j$ in \eqref{bsden} and \eqref{nm1}, one obtains that
 \begin{equation}\label{I_j 2}
\begin{aligned}
 \displaystyle
&\L_j(\alpha_{n}(t_j)-\a_{n}^\pi(t_j))+\int_{t_j}^{t_{j+1}}
(\beta_{n}(t)-\b_{n}^\pi(t))dW(t)\\
=&(\alpha_{n}(t_{j+1})-\a_{n}^\pi(t_{j+1}))+\int_{t_j}^{t_{j+1}}
\L(\alpha_{n}(t_j)-\alpha_{n}(t))dt \\ &+\int_{t_j}^{t_{j+1}}\bigg(f_n\Big(t_j,\alpha_n^\pi(t_j),\frac{1}{\Delta_j}
\me\big(\int_{t_j}^{t_{j+1}}\beta_n^\pi(s)ds\big|\mf_{t_j}\big)\Big)
-f_n(t,\alpha_n(t),\beta_n(t))\bigg)dt.\\
\end{aligned}
 \end{equation}
Since
\begin{equation*}
\begin{aligned}
&f_n\Big(t_j,\alpha_n^\pi(t_j),\frac{1}{\Delta_j}
\me\Big(\int_{t_j}^{t_{j+1}}\beta_n^\pi(s)ds\big|\mf_{t_j}\Big)\Big)
-f_n(t,\alpha_n(t),\beta_n(t))\\
\leq& L\bigg(\sqrt{t-t_j}+|\alpha_n^\pi(t_j)-\alpha_n(t)|+\Big|\frac{1}{\Delta_j}
\me\Big(\int_{t_j}^{t_{j+1}}\beta_n^\pi(s)ds\big|\mf_{t_j}\Big)-\beta_n(t)\Big|\bigg)\\
\leq & L\bigg(\sqrt{t-t_j}+|\alpha_n^\pi(t_j)-\alpha_n(t_j)|+|\alpha_n(t_j)-\alpha_n(t)|\\
&\q+\Big|\frac{1}{\Delta_j}
\me\Big(\int_{t_j}^{t_{j+1}}\beta_n^\pi(s)
-\beta_n(t_j)ds\big|\mf_{t_j}\Big)\Big|+|\beta_n(t_j)-\beta_n(t)|\bigg)\\
\leq & L\bigg(\sqrt{t-t_j}+|\alpha_n^\pi(t_j)-\alpha_n(t_j)|+|\alpha_n(t_j)-\alpha_n(t)|
    +|\beta_n(t_j)-\beta_n(t)|\\
&\q+\Big|\frac{1}{\Delta_j}
 \me\Big(\int_{t_j}^{t_{j+1}}\beta_n^\pi(s)
   -\beta_n(s)ds\big|\mf_{t_j}\Big)\Big|+\Big|\frac{1}{\Delta_j}
    \me\Big(\int_{t_j}^{t_{j+1}}\beta_n(s)
    -\beta_n(t_j)ds\big|\mf_{t_j}\Big)\Big|\bigg),
\end{aligned}
\end{equation*}
squaring both sides of \eqref{I_j 2} and then taking expectation, we get
 \begin{equation}\label{I_j 2s}
\begin{aligned}
&\me|\L_j(\alpha_{n}(t_j)-\a_{n}^\pi(t_j))|^2+\me\int_{t_j}^{t_{j+1}}
|\beta_{n}(t)-\b_{n}^\pi(t)|^2dt\\
\leq &(1+7\D_j/\e)\me|\alpha_{n}(t_{j+1})-\a_{n}^\pi(t_{j+1})|^2+(7+\e/\D_j)\bigg\{\me\bigg|\int_{t_j}^{t_{j+1}}
\L(\alpha_{n}(t_j)-\alpha_{n}(t))dt\bigg|^2\bigg.\\
&+\frac{4L^2\D_j^3}{9} +L^2\D_j^2\me|\alpha_n^\pi(t_j)-\alpha_n(t_j)|^2+L^2\me\Big(\int_{t_j}^{t_{j+1}}
|\alpha_n(t_j)-\alpha_n(t)|dt\Big)^2\\
&+L^2\me\Big(\int_{t_j}^{t_{j+1}}
|\beta_n(t_j)-\beta_n(t)|dt\Big)^2+L^2\me\bigg|\me\Big(\int_{t_j}^{t_{j+1}}
\beta_n^\pi(t)-\beta_n(t)dt\Big|\mf_{t_j}\Big)\bigg|^2\\
&\bigg.+L^2\me\bigg|\me\Big(\int_{t_j}^{t_{j+1}}
\beta_n(t)-\beta_n(t_j)dt\Big|\mf_{t_j}\Big)\bigg|^2\bigg\}\\
\leq &(1+7\D_j/\e)\me|\alpha_{n}(t_{j+1})-\a_{n}^\pi(t_{j+1})|^2\\
&+(7+\e/\D_j)
\bigg\{(\l_n^2\D_j+L^2\D_j)\me\int_{t_j}^{t_{j+1}}
|\alpha_{n}(t_j)-\alpha_{n}(t)|^2dt +\frac{4L^2\D_j^3}{9}\bigg.\\
&+L^2\D_j^2\me|\alpha_n^\pi(t_j)-\alpha_n(t_j)|^2+2L^2\D_j\me\int_{t_j}^{t_{j+1}}
|\beta_n(t_j)-\beta_n(t)|^2 dt\\
&\bigg.+L^2\D_j\me\int_{t_j}^{t_{j+1}}
|\beta_n^\pi(t)-\beta_n(t)|^2 dt\bigg\}.\\
 \end{aligned}
 \end{equation}
Therefore,
 \begin{equation}\label{I_j 3s1}
\begin{aligned}
&\Big(1+\l_1\D_j-\Big(7+\frac{\e}{\D_j}\Big)L^2\D_j^2\Big)\me|\alpha_{n}(t_j)-\a_{n}^\pi(t_j)|^2\\
&\qq\q+\Big(1-\Big(7+\frac{\e}{\D_j}\Big)L^2\D_j\Big)\me\int_{t_j}^{t_{j+1}}
|\beta_{n}(t)-\b_{n}^\pi(t)|^2dt\\
\leq &(1+7\D_j/\e)\me|\alpha_{n}(t_{j+1})-\a_{n}^\pi(t_{j+1})|^2\\
&+(7+\e/\D_j)
\bigg\{(\l_n^2\D_j+L^2\D_j)\me\int_{t_j}^{t_{j+1}}
|\alpha_{n}(t_j)-\alpha_{n}(t)|^2dt \bigg.\\
&\bigg.+\frac{4L^2\D_j^3}{9}+2L^2\D_j\me\int_{t_j}^{t_{j+1}}
|\beta_n(t_j)-\beta_n(t)|^2 dt\bigg\}.\\
 \end{aligned}
 \end{equation}
Set $\ds\e=\frac{\l_1-7L^2|\pi|}{L^2}$. 
Then 
$$\ds1-\Big(7+\frac{\e}{\D_j}\Big)L^2\D_j^2\geq 1-\l_1\D_j\geq\frac{1}{2} ,\,\,1+\l_1\D_j-\Big(7+\frac{\e}{\D_j}\Big)L^2\D^2_j\geq 1,$$ 
$$ 1+7\D_j/\e=1+\frac{7L^2\D_j}{\l_1-7L^2|\pi|},\,\,
  7+\e/\D_j=7+\frac{\l_1-7L^2|\pi|}{L^2\D_j}.$$
By \eqref{I_j 3s1}, we can easily obtain that
\begin{equation*}\label{I_j 4s}
\begin{aligned}
&\me|\alpha_{n}(t_j)-\a_{n}^\pi(t_j)|^2
+\frac{1}{2}\me\int_{t_j}^{t_{j+1}}
|\beta_{n}(t)-\b_{n}^\pi(t)|^2dt\\
\leq &\big(1+\frac{7L^2\D_j}{\l_1-7L^2|\pi|}\big)\me|\alpha_{n}(t_{j+1})-\a_{n}^\pi(t_{j+1})|^2
+\big(7+\frac{\l_1-7L^2|\pi|}{L^2\D_j}\big)
\bigg\{\frac{4L^2\D_j^3}{9}+ \bigg.\\
&\bigg.(\l_n^2\D_j+L^2\D_j)\me\int_{t_j}^{t_{j+1}}
|\alpha_{n}(t_j)-\alpha_{n}(t)|^2dt+2L^2\D_j\me\int_{t_j}^{t_{j+1}}
|\beta_n(t_j)-\beta_n(t)|^2 dt\bigg\}.\\
 \end{aligned}
 \end{equation*}
Choose $|\pi|$ sufficiently small such that $\l_1-7L^2|\pi|>\l_1/2$. By Lemmas \ref{alpha}-\ref{beta}, we obtain that
\begin{equation}\label{I_j 5s}
\begin{aligned}
&\me|\alpha_{n}(t_j)-\a_{n}^\pi(t_j)|^2
+\frac{1}{2}\me\int_{t_j}^{t_{j+1}}
|\beta_{n}(t)-\b_{n}^\pi(t)|^2dt\\
\leq &(1+C_1\D_j)\me|\alpha_{n}(t_{j+1})-\a_{n}^\pi(t_{j+1})|^2\\
&+\frac{C_2}{\D_j}
\bigg\{\frac{4L^2\D_j^3}{9}+C(\l_n^2\D^3_j+3L^2\D_j^3)(1+\l_n^2|\pi|)\bigg\}\\
\leq&(1+C_1\D_j)\bigg\{\me|\alpha_{n}(t_{j+1})-\a_{n}^\pi(t_{j+1})|^2
+\frac{1}{2}\me\int_{t_{j+1}}^{t_{j+2}}
|\beta_{n}(t)-\b_{n}^\pi(t)|^2dt\bigg\}\\
&+C_2(\l_n^2+L^2+\l_n^4|\pi|)\D_j^2,\\
\end{aligned}
\end{equation}
where $C_1$ and $C_2$ depend  on $\l_1,\,q,\,L,\,T$ and $M$.
Therefore, by Lemma \ref{Gronwall's ineq},
\begin{equation}\label{I_j 8}
\begin{aligned}
 \displaystyle
&\max_{0\leq j\leq N-2}\bigg\{\me|\alpha_{n}(t_j)-\a_{n}^\pi(t_j)|^2
+\frac{1}{2}\me\int_{t_j}^{t_{j+1}}
|\beta_{n}(t)-\b_{n}^\pi(t)|^2dt\bigg\}\\
\leq& e^{C_1T}\bigg(\me|\alpha_{n}(t_{N-1})-\a_{n}^\pi(t_{N-1})|^2
+\frac{1}{2}\me\int_{t_{N-1}}^{t_{N}}
|\beta_{n}(t)-\b_{n}^\pi(t)|^2dt\\
&+C(L^2+\l_n^2+\l_n^4|\pi|)\sum_{j=0}^{N-2}\D_j^2\bigg)\\
\leq&e^{C_1T}\bigg(\me|\alpha_{n}(t_{N-1})-\a_{n}^\pi(t_{N-1})|^2
+\frac{1}{2}\me\int_{t_{N-1}}^{t_{N}}
|\beta_{n}(t)-\b_{n}^\pi(t)|^2dt+C(\l_n^2+\l_i^4|\pi|)|\pi|\bigg).
\end{aligned}
 \end{equation}
Using (\ref{I_j 2}) once more, we have
 \begin{equation}\label{I_j 9}
\begin{aligned}
&\me|\alpha_{n}(t_{N-1})-\a_{n}^\pi(t_{N-1})|^2
+\frac{1}{2}\me\int_{t_{N-1}}^{t_{N}}
|\beta_{n}(t)-\b_{n}^\pi(t)|^2dt\\
\leq&C(\l_n^2+\l_n^4|\pi|)\bigg(\me |\a_{n}^{\pi}(T)-\a_{n}(T)|^2+|\pi|\bigg).
\end{aligned}
 \end{equation}
Combining \eqref{I_j 8} and \eqref{I_j 9}, we obtain that
\begin{equation*}\label{I_j 10}
\begin{aligned}
 \displaystyle
&\max_{0\leq j\leq N-1}\me\bigg\{\alpha_{n}(t_j)-\a_{n}^\pi(t_j)|^2
+\frac{1}{2}\me\int_{t_j}^{t_{j+1}}
|\beta_{n}(t)-\b_{n}^\pi(t)|^2dt\bigg\}\\
\leq&C(\l_n^2+\l_n^4|\pi|)\bigg(\me |\a_{n}^{\pi}(T)-\a_{n}(T)|^2+|\pi|\bigg).
\end{aligned}
 \end{equation*}
\vspace{3mm}

Now, by
(\ref{I_j 5s}), we have
\begin{equation*}
\begin{aligned}
&\displaystyle\sum_{j=0}^{N-2}\bigg\{\me|\alpha_{n}(t_j)-\a_{n}^\pi(t_j)|^2
+\frac{1}{2}\me\int_{t_j}^{t_{j+1}}
|\beta_{n}(t)-\b_{n}^\pi(t)|^2dt\bigg\}\\
\leq
&\sum_{j=0}^{N-2}\bigg\{(1+C_1\D_j)\me|\alpha_{n}(t_{j+1})-\a_{n}^\pi(t_{j+1})|^2
 +C(\l_n^2+\l_n^4|\pi|)\D_j^2\bigg\}.
\end{aligned}
\end{equation*}
Therefore,
\begin{equation}\label{nm4}
\begin{aligned}
&\me\int_0^T|\b_{n}(t)-\b_{n}^\pi(t)|^2dt
=\me\Big(\int_0^{t_{N-1}}+\int_{t_{N-1}}^{t_N}\Big)|\b_{n}(t)-\b^\pi_{n}(t)|^2dt\\
\leq &\sum_{j=0}^{N-2}2C_1\D_{j}\me(\a_{n}(t_{j+1})-\a^\pi_{n}(t_{j+1}))^2
      +(2+2C_1\D_{N-2})\me|\a_{n}(t_{N-1})-\a^\pi_{n}(t_{N-1})|^2\\
    &-2\me|\a_{n}(t_0)-\a^\pi_{n}(t_0)|^2
       +C(\l_n^2+\l_n^4|\pi|)\sum_{j=0}^{N-2}\D_j^2
       +\me\int_{t_{N-1}}^{t_N}|\b_{n}(t)-\b^\pi_{n}(t)|^2dt \\
\leq &\displaystyle C(\l_n^2+\l_n^4|\pi|)\bigg(\me |\a_{n}^{\pi}(T)-\a_{n}(T)|^2+|\pi|\bigg).\\
\end{aligned}
\end{equation}

\textbf{Step 2.}
Similar to \eqref{I_j 2s}, for any $j=0,1,\cdots,N-1$, we have
\begin{equation*}\label{max 1s}
\begin{aligned}
 \displaystyle
&|\L_j(\alpha_{n}(t_j)-\a_{n}^\pi(t_j))|
=|\me(\L_j(\alpha_{n}(t_j)-\a^\pi_{n}(t_j))\,\big|\,\mf_{t_j})|\\
\leq&\me\bigg(|\alpha_{n}(t_{j+1})-\a_{n}^\pi(t_{j+1})|
+\bigg|\int_{t_j}^{t_{j+1}}\L(\alpha_{n}(t_j)-\alpha_{n}(t))dt\bigg|\\
 &+L\int_{t_j}^{t_{j+1}}\bigg(\sqrt{t-t_j}+|\alpha_n^\pi(t_j)-\alpha_n(t_j)|+|\alpha_n(t_j)-\alpha_n(t)|\\
&  +|\beta_n(t_j)-\beta_n(t)|+\Big|\frac{1}{\Delta_j}
 \me\Big(\int_{t_j}^{t_{j+1}}\beta_n^\pi(s)
   -\beta_n(s)ds\big|\mf_{t_j}\Big)\Big|+\Big|\frac{1}{\Delta_j}\\
&    \me\Big(\int_{t_j}^{t_{j+1}}\beta_n(s)
    -\beta_n(t_j)ds\big|\mf_{t_j}\Big)\Big|\bigg)dt\bigg|\mf_{t_j}\bigg)\\
\leq & \me\bigg(|\alpha_{n}(t_{j+1})-\a_{n}^\pi(t_{j+1})|
+\bigg|\int_{t_j}^{t_{j+1}}\L(\alpha_{n}(t_j)-\alpha_{n}(t))dt\bigg|\\
 &+L\D_j|\alpha_n^\pi(t_j)-\alpha_n(t_j)|+L\int_{t_j}^{t_{j+1}}\bigg(\sqrt{t-t_j}+|\alpha_n(t_j)-\alpha_n(t)|\\
&+|\beta_n(t_j)-\beta_n(t)|+\Big|\frac{1}{\Delta_j}
 \me\Big(\int_{t_j}^{t_{j+1}}\beta_n^\pi(s)
   -\beta_n(s)ds\big|\mf_{t_j}\Big)\Big|+\Big|\frac{1}{\Delta_j}\\
&    \me\Big(\int_{t_j}^{t_{j+1}}\beta_n(s)
    -\beta_n(t_j)ds\big|\mf_{t_j}\Big)\Big|\bigg)dt\bigg|\mf_{t_j}\bigg).\\
\end{aligned}
 \end{equation*}
Hence
\begin{equation*}\label{max 2s}
\begin{aligned}
 \displaystyle
&|\alpha_{n}(t_j)-\a_{n}^\pi(t_j)|\\
\leq&\frac{1}{1-L\D_j}\me\bigg(|\alpha_{n}(t_{j+1})-\a_{n}^\pi(t_{j+1})|
+\bigg|\int_{t_j}^{t_{j+1}}\L(\alpha_{n}(t_j)-\alpha_{n}(t))dt\bigg|\\
 &+L\int_{t_j}^{t_{j+1}}\bigg(\sqrt{t-t_j}+|\alpha_n(t_j)-\alpha_n(t)|
    +|\beta_n(t_j)-\beta_n(t)|\\
&+\Big|\frac{1}{\Delta_j}
 \me\Big(\int_{t_j}^{t_{j+1}}\beta_n^\pi(s)
   -\beta_n(s)ds\big|\mf_{t_j}\Big)\Big|+\Big|\frac{1}{\Delta_j}
    \me\Big(\int_{t_j}^{t_{j+1}}\beta_n(s)
    -\beta_n(t_j)ds\big|\mf_{t_j}\Big)\Big|\bigg)dt\bigg|\mf_{t_j}\bigg)\\
\leq &\frac{1}{1-L|\pi|}\me\bigg(|\alpha_{n}(t_{j+1})-\a_{n}^\pi(t_{j+1})|
+b_j\bigg|\mf_{t_j}\bigg)\\
\leq &\Big(\frac{1}{1-L|\pi|}\Big)^2\me\big(|\alpha_{n}(t_{j+2})-\a_{n}^\pi(t_{j+2})|\big|\mf_{t_j}\big)
+\Big(\frac{1}{1-L|\pi|}\Big)^2\me\big(b_{j+1}\big|\mf_{t_j}\big)
+\frac{1}{1-L|\pi|}\me\big(b_j\big|\mf_{t_j}\big)\\
\leq&\cdots\\
\leq&\Big(\frac{1}{1-L|\pi|}\Big)^{N-j}\me\big(|\alpha_{n}(t_{N})-\a_{n}^\pi(t_{N})|\big|\mf_{t_j}\big)
+\sum_{k=1}^{N-j}\Big(\frac{1}{1-L|\pi|}\Big)^k\me\big(b_{j+k-1}\big|\mf_{t_j}\big)\\
\leq&\Big(\frac{1}{1-L|\pi|}\Big)^{N}\me\big(|\alpha_{n}(t_{N})-\a_{n}^\pi(t_{N})|\big|\mf_{t_j}\big)
+\Big(\frac{1}{1-L|\pi|}\Big)^{N}\me\Big(\sum_{k=0}^{N-1}b_{k}\Big|\mf_{t_j}\Big)\\
\leq& C_3\bigg\{\me\big(|\alpha_{n}(t_{N})-\a_{n}^\pi(t_{N})|\big|\mf_{t_j}\big)
+\me\Big(\sum_{k=0}^{N-1}b_{k}\Big|\mf_{t_j}\Big)\bigg\},\\
\end{aligned}
\end{equation*}
where $C_3$ is a constant depending only on $T,\,L$; and 
\begin{equation*}
\begin{aligned}
b_j=&\big|\int_{t_j}^{t_{j+1}}\L(\alpha_{n}(t_j)-\alpha_{n}(t))dt\big|+L\int_{t_j}^{t_{j+1}}\Big(\sqrt{t-t_j}+|\alpha_n(t_j)-\alpha_n(t)|
    +|\beta_n(t_j)-\beta_n(t)|\\
&+\big|\frac{1}{\Delta_j}
 \me\Big(\int_{t_j}^{t_{j+1}}\beta_n^\pi(s)
   -\beta_n(s)ds\big|\mf_{t_j}\Big)\big|+\big|\frac{1}{\Delta_j}
    \me\Big(\int_{t_j}^{t_{j+1}}\beta_n(s)
    -\beta_n(t_j)ds\big|\mf_{t_j}\Big)\big|\Big)dt.
\end{aligned}
\end{equation*}
Therefore,
\begin{equation}\label{max 3s}
\begin{aligned}
 \me\max_{0\leq j\leq N}|\alpha_{n}(t_j)-\a_{n}^\pi(t_j)|^2
\leq C\bigg\{\me|\alpha_{n}(t_{N})-\a_{n}^\pi(t_{N})|^2
+\me\Big(\sum_{k=0}^{N-1}b_{k}\Big)^2\bigg\}.\\
\end{aligned}
\end{equation}
For $\me\big(\sum_{k=0}^{N-1}b_{k}\big)^2$, we have the following estimate:
\begin{equation*}\label{max 4s}
\begin{aligned}
\me\Big(\sum_{k=0}^{N-1}b_{k}\Big)^2
\leq& \me\bigg(\sum_{j=0}^{N-1}\Big( \bigg|\int_{t_j}^{t_{j+1}}\L(\alpha_{n}(t_j)-\alpha_{n}(t))dt\bigg|\\
 &+L\int_{t_j}^{t_{j+1}}\big(\sqrt{t-t_j}+|\alpha_n(t_j)-\alpha_n(t)|
    +|\beta_n(t_j)-\beta_n(t)|\big)dt\\
&+\Big|\me\Big(\int_{t_j}^{t_{j+1}}\beta_n^\pi(s)
   -\beta_n(s)ds\big|\mf_{t_j}\Big)\Big|
   +\Big|\me\Big(\int_{t_j}^{t_{j+1}}\beta_n(s)
    -\beta_n(t_j)ds\big|\mf_{t_j}\Big)\Big|\Big)\bigg)^2\\
\end{aligned}
\end{equation*}
\begin{equation*}
\begin{aligned}
\leq & 6N\sum_{j=0}^{N-1}\Big((\l_n^2\D_j+L^2\D_j)\me\int_{t_j}^{t_{j+1}}|\alpha_n(t_j)-\alpha_n(t)|^2dt
\\
&+2L^2\D_j\me\int_{t_j}^{t_{j+1}}|\beta_n(t_j)-\beta_n(t)|^2dt+L^2\D_j^3 \Big)+6\me\int_0^T|\b_{n}(t)-\b_{n}^\pi(t)|^2dt\\
\leq& C(\l_n^2+\l_n^4|\pi|)\sum_{j=0}^{N-1}\D_j^2+6\me\int_0^T|\b_{n}(t)-\b_{n}^\pi(t)|^2dt\\
\leq &\displaystyle C(\l_n^2+\l_n^4|\pi|)\bigg(\me |\a_{n}^{\pi}(T)-\a_{n}(T)|^2+|\pi|\bigg),\\
\end{aligned}
\end{equation*}
which, together with \eqref{max 3s}, yields that
\begin{equation}\label{max 3s1}
\begin{aligned}
 \me\max_{0\leq j\leq N}|\alpha_{n}(t_j)-\a_{n}^\pi(t_j)|^2
\leq C(\l_n^2+\l_n^4|\pi|)\bigg(\me |\a_{n}^{\pi}(T)-\a_{n}(T)|^2+|\pi|\bigg).\\
\end{aligned}
\end{equation}
From the assumption, $\l_n^2|\pi|\leq 1$ holds. 
The above inequality, together with \eqref{nm4} and \eqref{max 3s1}, yields \eqref{nmm2}.
This completes the proof.
\end{proof}

Finally, set
\begin{equation*}\label{approximation}
\begin{aligned}
q_n^\pi(\cdot)=\sum_{i=1}^n \a_{n,i}^\pi(\cdot)\phi_i,\q
       r_n^\pi(\cdot)=\sum_{i=1}^n \b_{n,i}^\pi(\cdot)\phi_i.
 \end{aligned}
\end{equation*}
 From Theorems \ref{convergence speed} and \ref{I_j},
we obtain the following error estimate for our Galerkin numerical scheme.
\begin{theorem}\label{max convergence}
Let $\bf{(A1)}$ and $\bf{(A2)}$ hold, and suppose that $\l_n^2|\pi|\leq 1$ is true. 
Then
\begin{equation*}\label{max convergence 1}
\begin{aligned}
&\me\max_{0\leq j\leq N}\|q(t_j)-q_n^\pi(t_j)\|_{L^2(D)}^2
 +\me\int_{0}^{T}\|r(t)-r_n^\pi(t)\|_{L^2(D)}^2dt\\
 \leq & C\l_n^2\bigg(\me |\a_{n}^{\pi}(T)-\a_{n}(T)|^2+|\pi|\bigg)+\frac{C}{\l_{n+1}}\[\|q(T)\|^2_{L^2_{\mf_T}(\O;H_0^1(D))}
                        +\|f\|^2_{L^2_{\mathbb{F}}(\O\times(0,T);H_0^1(D))}\].
\end{aligned}
\end{equation*}
Here $C$ depends only on $q,\,T,\,L$, $M$ and $D$, and $\a_{n}^{\pi}(T)={\lan q_T^\pi,\phi_i\ran}_{L^2(D)}$,
where $q_T^\pi$ is an approximation of $q_T$.
\end{theorem}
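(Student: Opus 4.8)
The plan is to derive Theorem~\ref{max convergence} as a direct consequence of the two preceding convergence results by splitting the total error into a spatial (Galerkin) part and a temporal (backward Euler) part. Concretely, for each partition node $t_j$ I would write $q(t_j)-q_n^\pi(t_j)=\big(q(t_j)-q_n(t_j)\big)+\big(q_n(t_j)-q_n^\pi(t_j)\big)$ and $r(t)-r_n^\pi(t)=\big(r(t)-r_n(t)\big)+\big(r_n(t)-r_n^\pi(t)\big)$, and apply the elementary inequality $\|a+b\|^2\le 2\|a\|^2+2\|b\|^2$. This reduces the left-hand side of the theorem to a sum of four expectations, two involving the pair $(q-q_n,r-r_n)$ and two involving $(q_n-q_n^\pi,r_n-r_n^\pi)$.

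First I would treat the Galerkin part. Since $\max_{0\le j\le N}\|q(t_j)-q_n(t_j)\|_{L^2(D)}^2\le\sup_{t\in[0,T]}\|q(t)-q_n(t)\|_{L^2(D)}^2$, taking expectations and invoking Theorem~\ref{convergence speed} bounds $\me\max_{0\le j\le N}\|q(t_j)-q_n(t_j)\|_{L^2(D)}^2+\me\int_0^T\|r(t)-r_n(t)\|_{L^2(D)}^2dt$ by $\frac{C}{\l_{n+1}}\big[\|q(T)\|^2_{L^2_{\mf_T}(\O;H_0^1(D))}+\|f\|^2_{L^2_{\mathbb{F}}(\O\times(0,T);H_0^1(D))}\big]$, which is precisely the second term on the right-hand side of the claim.

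Next I would treat the backward Euler part. Using the expansions $q_n=\sum_{i=1}^n\a_{n,i}\phi_i$, $q_n^\pi=\sum_{i=1}^n\a_{n,i}^\pi\phi_i$, $r_n=\sum_{i=1}^n\b_{n,i}\phi_i$, $r_n^\pi=\sum_{i=1}^n\b_{n,i}^\pi\phi_i$ together with the orthonormality of $\{\phi_i\}$ in $L^2(D)$, Parseval's identity gives $\|q_n(t)-q_n^\pi(t)\|_{L^2(D)}^2=|\a_n(t)-\a_n^\pi(t)|^2$ and $\|r_n(t)-r_n^\pi(t)\|_{L^2(D)}^2=|\b_n(t)-\b_n^\pi(t)|^2$ for every $t$. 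Hence $\me\max_{0\le j\le N}\|q_n(t_j)-q_n^\pi(t_j)\|_{L^2(D)}^2+\me\int_0^T\|r_n(t)-r_n^\pi(t)\|_{L^2(D)}^2dt=\me\max_{0\le j\le N}|\a_n(t_j)-\a_n^\pi(t_j)|^2+\me\int_0^T|\b_n(t)-\b_n^\pi(t)|^2dt$, and under the hypothesis $\l_n^2|\pi|\le 1$ Theorem~\ref{I_j} bounds this by $C\l_n^2\big(\me|\a_n^\pi(T)-\a_n(T)|^2+|\pi|\big)$. Adding the two estimates, absorbing the factor $2$, and relabelling the constant (which then depends only on $q,T,L,M$ and $D$) yields the stated bound.

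There is no genuinely hard step here: the theorem is essentially a bookkeeping corollary of Theorems~\ref{convergence speed} and~\ref{I_j}. The only minor points that deserve a line of care are that the $\max$ over partition nodes in the $q$-component is legitimately dominated by the continuous $\sup$ appearing in Theorem~\ref{convergence speed}, and that $\a_n^\pi(T)-\a_n(T)$ is the coordinate vector of $P_n(q_T^\pi-q_T)$, so the terminal term of Theorem~\ref{I_j} is the one written in the statement; both are immediate from the definitions. One should also note that, since $\l_n^2|\pi|\le 1$, the factor $\l_n^2+\l_n^4|\pi|$ that actually arises inside Theorem~\ref{I_j} is comparable to $\l_n^2$, so the final bound carries $\l_n^2$ rather than a higher power of $\l_n$.
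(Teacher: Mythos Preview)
Your proposal is correct and matches the paper's approach exactly: the paper states Theorem~\ref{max convergence} without a separate proof, introducing it simply as the estimate obtained ``From Theorems~\ref{convergence speed} and~\ref{I_j}''. Your splitting $q-q_n^\pi=(q-q_n)+(q_n-q_n^\pi)$, the Parseval identification $\|q_n-q_n^\pi\|_{L^2(D)}^2=|\a_n-\a_n^\pi|^2$, and the observation that $\l_n^2+\l_n^4|\pi|\le 2\l_n^2$ under $\l_n^2|\pi|\le 1$ are precisely the bookkeeping steps the paper leaves implicit.
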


\begin{remark}

Generally speaking, one of the main difficulties in constructing a numerical scheme for BSDEs is to guarantee the regularity of the second part of the solution. When the terminal condition is a function of some forward diffusion, Ma and Zhang \cite{Ma-Zhang02} obtained the $L^2$-regularity of the solution's second component, which is the key point of Euler method; when terminal condition has no special form, Hu et al. \cite{Hu-Nualart-Song} also obtained the $L^2$-regularity under suitable conditions in terms of Malliavin calculus. In this paper, following some idea from \cite{Hu-Nualart-Song}, we impose assumptions $\bf{(A1)}$ and $\bf{(A2)}$.  

In the aforementioned numerical scheme, we suppose that $f$ in Eq. \eqref{110} is a non-random function.
As a matter of fact, we can deal with the random case under suitable assumptions on $f$. The reader can refer to \cite[Theorems 2.3 and 2.6]{Hu-Nualart-Song} for more details.
\end{remark}

\begin{remark}
In \cite{Grecksch-Kloeden96}, a Galerkin algorithm for the following forward parabolic stochastic equation is considered,
\begin{equation}
\left\{
\begin{array}{lll}
dU(t)=(A U(t)+f(U(t)))dt+g(U(t))dW(t), & \mbox{in}\,\, [0,T], \\
U(0)=U_0.
\end{array}
\right.
\end{equation}
Under suitable assumption of $f,\,g$ and $U_0$, the discretisation error is bounded by
$$C\Big(\frac{1}{\l_{n+1}}+\l_n^{2([r+1/2]+1)}|\pi|^{2r}\Big),$$
where
$2r$ is a positive integer, $[x]$ is the integer part of the real number $x$ and $C$ is a constant depending only on $U_0,\,f,\,g$ and $T$.  To some extent our result
is consistent with that of the forward equations.
\end{remark}

\section*{Acknowledgement}
The author acknowledges gratefully Professor Xu Zhang for his valuable suggestions during this work.

\end{document}